\newcommand{\printvalues}{topsep=\the\topsep; itemsep=\the\itemsep; parsep=\the\parsep; partopsep=\the\partopsep}
\newcommand{\alt}{\raise1pt\hbox{$\bigwedge$}}
\newcommand\s{\sigma}
\newcommand\la{\langle}
\newcommand\ra{\rangle}
\newcommand{\ip}{\langle\cdot,\cdot\rangle}
\newcommand{\on}{\operatorname} 
\newcommand\dd{{\mathfrak d}}
\newcommand\ggo{{\mathfrak g}}
\newcommand\aff{\mathfrak {aff}}
\newcommand\RR{\mathbb R}
\newcommand\sign{\operatorname{sign}}
\newcommand\Ric{\operatorname{Ric}}
\newcommand\hol{\operatorname{Hol}}
\newcommand\Hol{\operatorname{Hol}}
\newcommand\Span{\operatorname{Span}}
\theoremstyle{plain}
\newtheorem{thm}{Theorem}[section]
\newtheorem{lem}[thm]{Lemma}
\newtheorem{prop}[thm]{Proposition}
\newtheorem{cor}[thm]{Corollary}
\theoremstyle{definition}
\newtheorem{defn}[thm]{Definition}
\newtheorem*{Problem*}{Problem}
\theoremstyle{remark}
\newtheorem{rem}{Remark}[section]
\begin{document}

\title[{\tiny Parallel skew-symmetric tensors on $4$ dimensional metric Lie algebras}]{Parallel skew-symmetric tensors
 on $4$-dimensional metric Lie algebras}


\author{A. Herrera}
\address{Facultad de Ciencias Exactas, F\'isicas y Naturales, Universidad Nacional de C\'ordoba, Argentina. 
}
\curraddr{}
\email{cecilia.herrera@unc.edu.ar}
\thanks{The author was partially supported by CONICET, ANPCyT and SECyT-UNC (Argentina).}

\keywords{}

\date{}

\dedicatory{}

\begin{abstract}
We give a complete classification, up to isometric isomorphism and scaling, of $4$-dimensional metric Lie algebras $(\ggo,\ip)$ that admit a non-zero parallel skew-symmetric endomorphism. In particular, we distinguish those metric Lie algebras that admit such an endomorphism which is not a multiple of a complex structure, and for each of them we obtain the de Rham decomposition of the associated simply connected Lie group with the corresponding left invariant metric. 
On the other hand, we find that the associated simply connected Lie group is irreducible as a Riemannian manifold for those metric Lie algebras where each parallel skew-symmetric endomorphism is a multiple of a complex structure.

\end{abstract}

\maketitle
\section{Introduction}

Let $(M,g)$ be a Riemannian manifold. A skew-symmetric $(1,1)$-tensor $H:TM\to TM$  is said to be {\em parallel} if $(\nabla_X H)Y=0 $ for all vector fields $X,Y\in\mathfrak{X} (M)$, where $\nabla$ denotes the Levi-Civita connection associated to $g$.
If in addition $H^2=-I$ (where $I$ denotes the identity map), then $H$ is a complex structure and $(M,g,H)$ is called a K\"ahler manifold, an object widely studied in the literature. 
Here, we are interested in (connected) manifolds that admit
parallel tensors $H$ that are not multiple of a complex structure, that is, $H^2\neq-\lambda^2 I$ for any $\lambda\in\mathbb{R}$. 
  
We focus particularly on pairs $(G,g)$, where $G$ is a 4-dimensional non-abelian Lie group and $g$ is a left invariant metric on $G$, and we search for left invariant parallel skew-symmetric $(1,1)$-tensors on $G$. As usual, we work at the Lie algebra level. Namely, we consider non-abelian 4-dimensional metric  Lie algebras $(\mathfrak{g},\langle\cdot,\cdot\rangle)$, and look for non-zero skew-symmetric endomorphisms $H:\mathfrak{g}\rightarrow\mathfrak{g}$ that are parallel; see Section \ref{Section: Preliminaries}.
If we add the condition $H^2=-I$, then this problem was completely settled by Ovando in \cite{Ov}, where  $4$-dimensional pseudo-K\"ahler Lie algebras have been classified up to equivalence. 
In the present paper, we classify up to isometric isomorphism and scaling non-abelian  4-dimensional metric Lie algebras $(\ggo,\ip)$ admitting a parallel endomorphism that is not a multiple of a complex structure. In addition, given such a pair  $(\ggo,\ip)$, we classify all possible parallel endomorphisms. These results are included in Section \ref{Section: Main theorem}; more precisely, in Theorem \ref{dim4} and Proposition \ref{equivalencias1}.

Finally, in Section \ref{The De Rham decomposition}, for each non-abelian $4$-dimensional metric Lie algebra $(\ggo,\ip)$ admitting a non-zero parallel skew-symmetric endomorphism, we study the de Rham decomposition of the associated simply connected Riemannian Lie group $(G,g)$. 
We find that $(G,g)$ is irreducible (as a Riemannian manifold) if and only if the only parallel endomorphisms on $(\ggo,\ip)$  are multiple of complex structures. Note that the `only if' part is a consequence of the following well-known result:
{\em Let $(M,g)$ be a complete simply connected irreducible Riemannian manifold. Then every parallel skew-symmetric  (1,1)-tensor is a multiple of a complex structure}; \cite[Theorem 10.3.2]{Petersen}.


\ 

\noindent
 { \it Acknowledgment:} This article grew out of part of my PhD thesis (\cite{ACHerrera}) under the supervision of  Isabel Dotti. I would like to express my thanks to her and Adri\'an Andrada for their remarks, corrections and useful suggestions in the preparation of this work.

\

\section{Preliminaries}\label{Section: Preliminaries}
In this section we recall the general definition of parallel skew-symmetric tensor on a Riemannian manifold and then we adapt it to the case of a left invariant parallel skew-symmetric tensor on a Lie group with a left invariant metric. This enables us to work at the Lie algebra level. We also define a notion of equivalence on parallel tensors on a metric Lie algebra.

\subsection{Parallel tensors} Let $(M,g)$ be a Riemannian manifold and let $\nabla$ be the associated Levi-Civita connection. 
A skew-symmetric tensor $H:TM\to TM$ is said to be parallel if $\left(\nabla_XH\right)Y=0$ for all  $X, Y \in\mathfrak{X}(M)$, where $\nabla H$ denotes the covariant derivative of $H$. Recall that
\[ \left( \nabla_{X}H\right)Y = \nabla _X \left( HY\right) - H
\left(\nabla _X Y \right).\]
If in addition $H^2=-I$, then $H$ is a complex structure and $(M,g,H)$ is a K\"ahler manifold, with the K\"ahler form given by $\omega(X,Y)=g(HX,Y)$.

 
\subsection{Left invariant parallel tensors on Lie groups}
Let $G$ be a Lie group, and let $\ggo$ be the Lie algebra of left invariant vector fields on $G$. 
We assume that $g$ is a left invariant metric, i.e., the left translation $L_p:G\rightarrow G$ is an  isometry for any $p\in G$. 
Every left invariant metric $g$ on $G$ determines an inner product $\la\cdot,\cdot\ra$ on $\ggo$: $\langle x,y\rangle:=g_e(x_e,y_e)$ for $x,y\in\ggo$; and conversely, any inner product on $\ggo$ determines uniquely a left invariant metric on $G$.

We fix a left invariant metric $g$ on $G$ and denote as $\la\cdot,\cdot\ra$ its induced inner product on $\ggo$. Let $\nabla$ be the Levi-Civita connection associated to $g$.
It is a fact that for $x,y\in\ggo$, $\nabla_xy\in\ggo$ and it is given by the Koszul formula 
\begin{equation}\label{koszul}
2\la \nabla_xy,z\ra=\la [x,y],z\ra-\la [y,z],x\ra+\la [z,x],y\ra.
\end{equation}
It is easy to see that $\nabla_x:\ggo\to \ggo$ is a skew-symmetric endomorphism with respect to $\la \cdot, \cdot \ra$ for any $x\in\ggo$.

We consider skew-symmetric $(1,1)$-tensors $H:TG\to TG$ that are left invariant: $(\mathrm{d} L_p)_{q}H_q=H_{pq}(\mathrm{d}L_p)_q$ for all $p,q\in G$. Every such tensor induces a skew-symmetric endomorphism $H:\ggo\to\ggo$ (again denoted by $H$) and conversely any skew-symmetric endomorphism on $\ggo$  extends uniquely to a skew-symmetric left invariant tensor on $G$.

It is easy to see that $H:TG\to TG$ as above is parallel if and only if the associated endomorphism $H:\ggo\to\ggo$ is parallel, in the sense that $H$ commutes with $\nabla_x:\ggo\to\ggo$ for all $x\in\ggo$.

\subsection{Parallel endomorphisms on metric Lie algebras}
The previous discussion enables us to work algebraically. Namely, we fix an abstract real Lie algebra $\ggo$ endowed with an inner product $\ip$, and we search for skew-symmetric linear endomorphisms $H:\ggo\to \ggo$ satisfying the condition $\nabla_x(Hy)=H\nabla_xy$ for all $x,y\in\ggo$, where $\nabla_x y$ is defined via the Koszul formula (\ref{koszul}). 

The pair $(\ggo,\la\cdot,\cdot\ra)$ is called a metric Lie algebra, and an endomorphism $H$ as above is called a parallel tensor. If in addition $H^2=-I$, then $(\ggo,\ip)$ is called a K\"ahler Lie algebra.

We intend to classify triples $(\ggo,\la\cdot,\cdot\ra,H)$, where $H$ is a non-zero parallel skew-symmetric  endomorphism.  
Note, that if $H$ is a parallel skew-symmetric endomorphism on $(\ggo,\ip)$, then the same holds on $(\ggo,\lambda^2\ip)$ for any $\lambda>0$. This implies that we may consider the same parallel skew-symmetric endomorphism $H$ on homothetic metrics. In addition, $cH$ will be also parallel on $(\ggo,\ip)$ for any $c\in \RR$.

A natural notion of equivalence on the set of parallel skew-symmetric endomorphisms on $(\ggo, \la\cdot,\cdot\ra)$ is given by the following definition:
\begin{defn}
\label{equivalentes}
$H_1$ and $H_2$ are said to be equivalent if there exists an isometric isomorphism of Lie algebras  $\phi:\mathfrak{g}\rightarrow\mathfrak{g}$ such that $\phi H_1=H_2 \phi$. 
\end{defn}

We can now restate the classification problem as follows:
\begin{Problem*} \label{Classification problem} Classify $4$-dimensional metric Lie algebras $(\ggo, \ip)$ (up to isometry and homothety) that admit a non-zero parallel skew-symmetric endomorphism $H$, and classify  $H$ up to equivalence. 
\end{Problem*}

\

\section{Main results}\label{Section: Main theorem}
In this section we determine all triples $(\ggo,\ip,H)$, where $(\ggo,\ip)$ is a non-abelian $4$-dimensional metric Lie algebra and $H:\ggo\to\ggo$ is a non-zero parallel skew-symmetric endomorphism. The case where $H^2=-I$  was done in \cite{Ov} and we start by recalling this. 

\subsection{4-dimensional K\"ahler Lie algebras}

We first list the non-abelian real solvable Lie algebras of dimension $\leq 3$ with the notation used in \cite{ABDO}. These are:
\begin{align}
\begin{split}
\label{3dimensional}
\mathfrak{aff}(\mathbb{R}):& \quad  [e_1,e_2]=e_2,\\
\mathfrak{h}_3:& \quad [e_1,e_2]=e_3,\\
\mathfrak{r}_{3,\lambda}:& \quad [e_1,e_2]=e_2,\  [e_1,e_3]=\lambda e_3,\ \ \lambda\in\mathbb{R},\\
\mathfrak{r}'_{3,\lambda}:& \quad [e_1,e_2]=\lambda e_2-e_3,\ [e_1,e_3]=e_2+\lambda e_3,\ \lambda\in\mathbb{R},
\end{split}
\end{align}
where $\{ e_1,e_2\}$ is a basis of $\aff(\RR)$ and  $\{e_1,e_2,e_3\}$ is a basis of $\mathfrak{h}_3$, $\mathfrak{r}_{3,\lambda}$ and $\mathfrak{r}'_{3,\lambda}$.  
They are all pairwise non-isomorphic, except for $\mathfrak{r}_{3,\lambda}\cong \mathfrak{r}_{3,1/\lambda}$ if $\lambda\neq 0$, and $\mathfrak{r}'_{3,\lambda}\cong\mathfrak{r}'_{3,-\lambda}$.

As a side note, $\mathfrak{r}_{3,-1}$ is the Lie algebra of the group of rigid motions on the Minkowski plane, and it is usually denoted $\mathfrak{e}(1,1)$. Note also that  $\mathfrak{r}_{3,0}=\mathbb{R} \times \mathfrak{aff}(\mathbb{R})$, and that $\mathfrak{r}'_{3,0}$ is the Lie algebra of the group of rigid motions on the 2-dimensional euclidean plane, and is usually denoted $\mathfrak{e}(2)$.

We now list three families of 4-dimensional solvable Lie algebras expressed in the basis $\{ e_1,e_2,e_3,e_4\}$.  We follow the notation given in \cite{ABDO}.
\begin{align}
\begin{split}
\label{notationused}
 \mathfrak{r}'_{4,\lambda,0}& : [e_4,e_1]=\lambda e_1,\ [e_4,e_2]=-e_3, \ [e_4,e_3]=e_2, \ \lambda>0,\\
\mathfrak{d}_{4,\lambda}&:[e_4,e_1]=\lambda e_1, \ [e_4,e_2]=(1-\lambda)e_2, \ [e_4,e_3]=e_3, \ [e_1,e_2]=e_3,\  \lambda \geq \frac{1}{2},\\
\mathfrak{d}'_{4,\lambda}&:[e_4,e_1]=\lambda e_1-e_2, \ [e_4,e_2]=e_1+\lambda e_2, \ [e_4,e_3]=2 \lambda e_3, \ [e_1,e_2]=e_3,\  \lambda \geq 0.
\end{split}
\end{align}
These are all pairwise non-isomorphic, according to \cite[Theorem 1.5]{ABDO}.

The content of the next theorem is included in  \cite[Proposition 3.3]{Ov}, where $4$-dimensional pseudo-K\"ahler Lie algebras were classified. Here we only consider positive definite K\"ahler Lie algebras.

\begin{thm}\cite[Proposition 3.3]{Ov}
Let $(\ggo,\ip,J)$ be a $4$-dimensional 
K\"ahler  Lie algebra. Then there exists an orthonormal basis $\{ e_1,e_2,e_3,e_4\}$ where the Lie brackets and $J$ are given as in Table 1.
\end{thm}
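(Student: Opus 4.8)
The plan is to convert the parallelism condition into the two classical Kähler conditions and then run the classification through the known structure of complex solvable Lie algebras in dimension $4$. Since $\nabla$ is the torsion-free Levi-Civita connection and $J$ satisfies $J^2=-I$ and is skew-symmetric, the requirement $\nabla_x J=J\nabla_x$ for all $x\in\ggo$ is equivalent to the conjunction of \emph{integrability} of $J$, i.e. the vanishing of the Nijenhuis bracket $N_J(x,y)=[Jx,Jy]-[x,y]-J[Jx,y]-J[x,Jy]$, and \emph{closedness} of the fundamental form $\omega(x,y)=\la Jx,y\ra$. Indeed a parallel almost complex structure for a torsion-free connection is automatically integrable and has parallel, hence closed, fundamental form, and conversely $N_J=0$ together with $d\omega=0$ forces $\nabla J=0$. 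Note in particular that the paper only posits $J^2=-I$, so integrability is not assumed but is obtained for free from parallelism. This step replaces the differential-geometric hypothesis by purely algebraic equations on the structure constants.

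First I would dispose of the non-solvable algebras. In dimension $4$ a non-solvable Lie algebra is $\mathfrak{sl}(2,\RR)\oplus\RR$ or $\mathfrak{su}(2)\oplus\RR=\mathfrak{u}(2)$, both unimodular. By Hano's theorem a unimodular Lie group carrying a left-invariant Kähler metric is flat, and by Milnor's description of flat metric Lie algebras such an algebra cannot contain a simple ideal; this eliminates both cases and confines the search to solvable $\ggo$. As a safeguard one can argue directly: $\mathfrak{u}(2)$ is compact and non-abelian, hence admits no left-invariant Kähler metric, while $\mathfrak{sl}(2,\RR)\oplus\RR$ is excluded by a short curvature computation using the Koszul formula \eqref{koszul}.

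Next I would invoke the classification of integrable complex structures on $4$-dimensional solvable Lie algebras, normalizing $J$ in a suitable basis so that $Je_1=e_2$ and $Je_3=e_4$. For each Lie algebra in that list I would write down the most general $J$-Hermitian inner product $\ip$, impose $d\omega=0$, and solve the resulting polynomial system in the structure constants jointly with the Jacobi identity. Because closedness is linear in the brackets once $\omega$ is fixed, it prunes the list of complex solvable algebras sharply and constrains the free metric parameters; the survivors are exactly the algebras appearing in Table 1, drawn from the families $\mathfrak{r}'_{4,\lambda,0}$, $\mathfrak{d}_{4,\lambda}$, $\mathfrak{d}'_{4,\lambda}$ together with the relevant low-dimensional products built from $\aff(\RR)$, $\mathfrak{h}_3$ and $\RR$.

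Finally, for each surviving triple I would produce the orthonormal basis of Table 1 by applying a change of basis that preserves both $\ip$ and $J$, i.e. an element of the unitary group acting on $(\ggo,\ip,J)$, using it to diagonalize the relevant symmetric operators and to absorb residual parameters by the allowed scaling. The main obstacle is organizational rather than conceptual: guaranteeing that the case analysis is exhaustive and that the resulting normal forms are pairwise non-isometric. This requires carefully tracking the isometry-and-scaling freedom and separating cases by invariants such as the isomorphism type of $\ggo$, its unimodularity defect, and the spectrum of the Ricci operator. Since the statement is precisely \cite[Proposition 3.3]{Ov} restricted to the positive-definite case, at this stage I would cross-check the normal forms obtained above against that reference.
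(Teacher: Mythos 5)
Your proposal is sound and follows essentially the same route as the paper's own treatment: the paper gives no proof of this theorem, quoting it verbatim from \cite[Proposition 3.3]{Ov}, and your reduction of $\nabla J = 0$ to integrability $N_J=0$ plus closedness $d\omega=0$, followed by restriction to solvable algebras and a case analysis over complex structures with closed fundamental form, is precisely the strategy of that cited reference (your Hano--Milnor elimination of $\mathfrak{sl}(2,\RR)\oplus\RR$ and $\mathfrak{u}(2)$ is a legitimate shortcut in the positive-definite case considered here). The only caveat is that the computational core --- solving $d\omega=0$ case by case and deriving the normal forms of Table 1 --- is deferred rather than executed, which you appropriately anchor to \cite{Ov} itself.
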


     \begin{table}[h!]
\centering
\begin{spacing}{1.5}
\begin{tabular}{l l l }
\hline
\hline

Lie algebra  & Lie bracket in an orthonormal basis & Complex structure \\  \hline
     \hline
$\RR^2 \times \aff(\RR)$& $[e_1,e_2]=t e_2$, $ t>0$ & $Je_1=e_2$, $Je_3=e_4$ 
  \\ \hline

$ \RR \times \mathfrak{e}(2)$ &$ [e_1,e_2]=-t e_3$, $[e_1,e_3]=t e_2$, $  t>0$ & $Je_1=e_4$, $Je_2=e_3$  \\
  \hline
$\mathfrak{r}_{4,\lambda,0}'$  &$ [e_4,e_1]=t e_1$, $[e_4,e_2]=-\frac{t}{\lambda} e_3$,   &  $J_1e_1=-e_4$, $J_1e_2=e_3$  \\
      $\lambda >0$                         &$ [e_4,e_3]=\frac{t}{\lambda} e_2$ , $t >0$ &  $J_2e_1=-e_4$, $J_2e_2=-e_3$  \\ 
\hline
$\aff(\RR)\times \aff(\RR)$&$[e_1,e_2]=te_2$, $ [e_3,e_4]=s e_4 $,    $t, s>0$ & $Je_1=e_2$, $Je_3=e_4$ \\                                
 \hline

$\mathfrak{d}_{4,\frac{1}{2}}$& $ [e_1,e_2]=t e_3$, $[e_4,e_3]=t e_3$, & $Je_1=e_2$, $Je_4=e_3$
    \\
   &  $ [e_4,e_1]=\frac{t}{2} e_1$, $[e_4,e_2]=\frac{t}{2} e_2$, $t>0$&  \\ \hline
   $\mathfrak{d}_{4,2}$ & $[e_1,e_2]=t e_3$, $[e_4,e_3]=\frac{t}{2} e_3$,   & $Je_4=-e_1$, $Je_2=e_3$ \\ 
&$ [e_4,e_1]=t e_1$, $[e_4,e_2]=-\frac{t}{2} e_2$, $t >0$ & \\ \hline
 
$\mathfrak{d}_{4,\frac{\delta}{2}}'$ & $[e_1,e_2]=t e_3$, $[e_4,e_1]=\frac{t}{2}e_1-\frac{t}{\delta}e_2 $, & $J_1e_1=e_2$, $J_1e_4=e_3$ \\
$\delta>0$   & $  [e_4,e_3]=t e_3$, $[e_4,e_2]=\frac{t}{\delta}e_1+\frac{t}{2}e_2$, $t>0$  &  $J_2e_1=-e_2$, $J_2e_4=-e_3$  \\
   \hline     
\end{tabular}
\end{spacing}
\caption{ $4$-dimensional K\"ahler Lie algebras}
   \label{tabla11}
  \end{table}

\subsection{Parallel endomorphisms that are not multiple of a complex structure}
 We now describe non-abelian $4$-dimensional metric Lie algebras that admit a parallel skew-symmetric endomorphism $H$ which is not multiple of a complex structure. We may assume that $H$ is always non-zero in this subsection. 

We will use the following lemma whose proof is straightforward.
\begin{lem}\label{lemautil}
 Let $A, B\in\mathbb{R}^{4\times 4}$ be two skew-symmetric matrices such that \[B=\left[ \begin{matrix}0&-s&0&0\\s&0&0&0\\0&0&0&-t\\0&0&t&0 \end{matrix}\right] \quad \mbox{ with } |s|\neq |t|.\]  
If $AB=BA$, then $A$ has also the form of $B$, that is, there exist $s',t' \in \mathbb{R}$ such that 
\[ A= \left[ \begin{matrix}0&-s'&0&0\\s'&0&0&0\\0&0&0&-t'\\0&0&t'&0 \end{matrix}\right].\]
\end{lem}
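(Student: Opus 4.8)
The plan is to exploit the block structure dictated by $B$. Writing $\RR^4=\RR^2\oplus\RR^2$ and setting $J=\left[\begin{smallmatrix}0&-1\\1&0\end{smallmatrix}\right]$, the hypothesis says $B=\left[\begin{smallmatrix}sJ&0\\0&tJ\end{smallmatrix}\right]$. I would write $A$ in the corresponding $2\times2$ block form $A=\left[\begin{smallmatrix}P&Q\\ -Q^{\mathsf T}&S\end{smallmatrix}\right]$, where the diagonal blocks $P,S$ are $2\times2$ skew-symmetric and the lower-left block is $-Q^{\mathsf T}$ precisely because $A$ itself is skew-symmetric. Since every $2\times2$ skew-symmetric matrix is a scalar multiple of $J$, the diagonal blocks already have the shape $P=s'J$, $S=t'J$ required by the conclusion, so the whole content of the lemma is to prove that the off-diagonal block $Q$ vanishes.

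Multiplying out $AB=BA$ in blocks, the diagonal equations $sPJ=sJP$ and $tSJ=tJS$ hold automatically because $P$ and $S$ are multiples of $J$. The only nontrivial relation comes from the off-diagonal blocks and reads
\[
t\,QJ = s\,JQ,
\]
a Sylvester-type equation for the single $2\times2$ matrix $Q$; the $(2,1)$ block produces exactly the transpose of this relation (using $J^{\mathsf T}=-J$), so it carries no extra information. Thus the problem reduces entirely to showing that $|s|\neq|t|$ forces $Q=0$.

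This last point is the only real step, and it is where the condition $|s|\neq|t|$ enters. I would settle it by either of two routes. Writing $Q=\left[\begin{smallmatrix}a&b\\c&d\end{smallmatrix}\right]$ and expanding, the relation $t\,QJ=s\,JQ$ becomes a homogeneous linear system in $(a,b,c,d)$ that splits into two $2\times2$ subsystems, each with coefficient determinant a nonzero multiple of $t^2-s^2$; since $t^2-s^2\neq0$ the only solution is $Q=0$. Alternatively, and more conceptually, I would decompose $Q=Q_++Q_-$ into its $J$-commuting and $J$-anticommuting parts (the complex-linear and complex-antilinear parts under $\RR^2\cong\CC$); the equation then separates into $(t-s)Q_+=0$ and $(t+s)Q_-=0$, and $|s|\neq|t|$ means both $t-s$ and $t+s$ are nonzero, so $Q_+=Q_-=0$. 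Note that this argument uses only $t\neq\pm s$, not $s,t\neq0$ separately, so the degenerate cases where one parameter is zero are covered automatically.

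With $Q=0$ the matrix $A$ is block-diagonal with skew-symmetric $2\times2$ blocks, i.e.\ $A=\left[\begin{smallmatrix}s'J&0\\0&t'J\end{smallmatrix}\right]$, which is exactly the asserted form. There is no genuine obstacle here; the computation is short, and the role of $|s|\neq|t|$ is precisely to guarantee invertibility of the operator $Q\mapsto t\,QJ-s\,JQ$ on $2\times2$ matrices. This invertibility fails when $|s|=|t|$, where $B$ acquires a repeated pair of eigenvalues and one indeed finds commuting skew-symmetric matrices not of the block-diagonal shape, so the hypothesis cannot be dropped.
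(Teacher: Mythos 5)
Your proof is correct. The paper does not write out an argument at all --- it introduces the lemma as one ``whose proof is straightforward'' --- and your block computation is exactly that straightforward verification: the skew-symmetry of $A$ handles the diagonal blocks for free, the commutation relation reduces to the single equation $t\,QJ=s\,JQ$, and the determinant (or $J$-commuting/anticommuting splitting) argument correctly isolates $t^2-s^2\neq 0$ as the reason $Q=0$, with your closing remark rightly showing the hypothesis $|s|\neq|t|$ cannot be dropped.
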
 

\begin{thm}\label{dim4}
Let $(\ggo,\ip)$ be a non-abelian $4$-dimensional metric Lie algebra, and let $H:\ggo\to\ggo$ be a parallel skew-symmetric endomorphism which is not a multiple of a complex structure. Then there exists an orthogonal basis $\{e_1,f_1,e_2,f_2\}$ under which the Lie brackets, $\ip$ and $H$ are given as in Table \ref{tabla1} with $|a_1|\neq |a_2|$. 
Moreover, any endormophism appearing in Table \ref{tabla1} is parallel for the corresponding metric Lie algebra.


\end{thm}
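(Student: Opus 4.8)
<br>

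The plan is to reduce the problem to linear algebra on $\ggo$ by exploiting the structure of a parallel skew-symmetric endomorphism that is not a multiple of a complex structure. The starting observation is that a parallel $H$ commutes with every $\nabla_x$, and in particular its eigenspace decomposition (over $\CC$) is $\nabla$-invariant. Since $H$ is skew-symmetric, $H^2$ is a symmetric, negative semidefinite endomorphism whose eigenvalues are $0$ and negatives of squares; because $H$ is not a multiple of a complex structure, $H^2$ has at least two distinct eigenvalues. The real eigenspaces of $H^2$ are therefore mutually orthogonal, $\nabla$-parallel subspaces of $\ggo$, and this is precisely the setting of Lemma \ref{lemautil}: in a suitable orthonormal basis, $H$ is block-diagonal with two $2\times 2$ skew blocks with parameters $a_1,a_2$ satisfying $|a_1|\neq|a_2|$.

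First I would make $H$ concrete: choose an orthonormal basis $\{e_1,f_1,e_2,f_2\}$ adapted to the $H^2$-eigenspace splitting so that $H e_1=a_1 f_1$, $Hf_1=-a_1 e_1$, $He_2=a_2 f_2$, $Hf_2=-a_2 e_2$ with $|a_1|\neq|a_2|$. The key structural input is then that each $\nabla_x$ commutes with $H$, so by Lemma \ref{lemautil} every $\nabla_x$ is simultaneously block-diagonal in the same $(e_1,f_1)\oplus(e_2,f_2)$ splitting. Feeding this back through the Koszul formula (\ref{koszul}) constrains the Lie brackets: the block-diagonal form of $\nabla_x$ forces the inner products $\la[x,y],z\ra$ to vanish whenever the indices mix the two planes in the wrong way, which severely limits how the bracket can couple $\{e_1,f_1\}$ to $\{e_2,f_2\}$.

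Next I would turn these constraints into an explicit determination of the bracket. Writing $\nabla_{e_i},\nabla_{f_i}$ in block form and re-inserting them into (\ref{koszul}) yields a system of bilinear relations among the structure constants; solving it (together with the Jacobi identity and the skew-symmetry of each $\nabla_x$) should collapse the generic $4$-dimensional non-abelian metric Lie algebra into the finite list appearing in Table \ref{tabla1}. At this stage one also uses non-abelianness to discard the trivial case and normalizes by scaling the metric (homothety) to pin down the remaining free parameters, exactly as permitted by the discussion preceding the Problem. The orthogonality (rather than orthonormality) of the basis in the statement suggests that one rescales the two planes independently, which is why the final basis is only orthogonal.

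For the converse ``Moreover'' clause I would proceed by direct verification: for each entry of Table \ref{tabla1}, compute $\nabla_x$ from the listed brackets via Koszul and check that the displayed $H$ commutes with each $\nabla_x$. This is a routine but finite computation. The main obstacle I anticipate is not any single step but the bookkeeping in the middle paragraph: extracting the Lie-algebra classification from the commutation constraints requires carefully handling the several sign and parameter cases (e.g.\ whether a block of $H$ is zero, whether the two planes interact through the bracket at all, and the Jacobi-identity compatibility), and matching the resulting normal forms against the previously listed $3$- and $4$-dimensional solvable algebras in (\ref{3dimensional}) and (\ref{notationused}). Keeping the case analysis exhaustive yet non-redundant, so that the table is complete and each line genuinely occurs, is where the real care is needed.
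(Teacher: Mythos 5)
Your proposal is correct and follows essentially the same route as the paper: an orthonormal basis adapted to the $H^2$-eigenspace splitting with $|a_1|\neq|a_2|$, Lemma \ref{lemautil} applied to each $\nabla_x$ to force the block form $\nabla_x=\alpha(x)\,(\text{rotation in }\Span\{e_1,f_1\})\oplus\beta(x)\,(\text{rotation in }\Span\{e_2,f_2\})$, torsion-freeness to express the brackets through the two linear forms $\alpha,\beta$, the Jacobi identity to produce quadratic relations, and direct verification of parallelism for the converse. The only difference is that you defer the bookkeeping you rightly identify as the real work, which is exactly what the paper's proof consists of: it packages the ten Jacobi relations as $\det U=\det V=0$, $AB=0$, $CD=0$ and runs the case analysis ($C\neq 0$ invertible forcing $D=0$; $A\neq 0$ by symmetry; $A=C=0$) to land on the four rows of Table \ref{tabla1} after the independent rescalings of the two planes that you correctly predict account for the merely orthogonal (not orthonormal) final basis.
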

\begin{table}[h!]
\centering
\begin{spacing}{1.25}
\begin{tabular}{l l l}
\hline
\hline
   Lie algebra  &  Metric  & Parallel tensor\\
\hline \hline
  {\small $\begin{matrix}\RR \times \mathfrak{e}(2):\\
   [e_1,e_2]=-f_2,\\ [e_1,f_2]= e_2   \end{matrix}$}
 &   {\small $\ip_t=\left[ \begin{matrix} t&&&\\ &t&&\\ &&t&\\ &&&t \end{matrix} \right]$, $t>0$} & {\small $\left[ \begin{array}{cccc} 0&-a_1&0&0\\ a_1&0&0&0\\ 0&0&0&-a_2\\ 0&0&a_2&0 \end{array} \right]$}
 \\
 \hline 
\begin{small}
 $
\begin{matrix} \RR^2\times \aff(\RR) :\\
[e_2,f_2]=f_2\end{matrix}$
 \end{small}
& 
 {\small $\ip_t=\left[ \begin{matrix} t&&&\\ &t&&\\ &&t&\\ &&&t \end{matrix} \right]$, $t>0$}
   & {\small $\left[ \begin{array}{cccc} 0&-a_1&0&0\\ a_1&0&0&0\\ 0&0&0&-a_2\\ 0&0&a_2&0 \end{array} \right]$}
   \\ \hline
{\small $\begin{matrix}  \mathfrak{r}'_{4,\lambda,0},\ \lambda>0:\\ 
[e_1,f_1]=\lambda  f_1, \\ 
 [e_1,f_2]= e_2,\\  
 [e_1,e_2]=- f_2\end{matrix}$}
&  {\small $\ip_t= \left[ \begin{matrix} t&&&\\ &t&&\\ &&t&\\ &&&t \end{matrix} \right]$, $t>0$}
 & {\small $\left[ \begin{array}{cccc} 0&-a_1&0&0\\ a_1&0&0&0\\ 0&0&0&-a_2\\ 0&0&a_2&0 \end{array} \right]$}
 \\
  \hline
{\small $\begin{matrix} \aff(\RR)\times \aff(\RR):\\
  [e_1,f_1]= f_1,\\
[e_2,f_2]=f_2\end{matrix}$}
 & {\small$\ip_{t,s}=\left[ \begin{matrix} t&&&\\ &t&&\\ &&ts&\\ &&&ts \end{matrix} \right]$, $\begin{matrix}
s,t>0\\ s\leq 1
\end{matrix}$} & {\small $\left[ \begin{array}{cccc} 0&-a_1&0&0\\ a_1&0&0&0\\ 0&0&0&-a_2\\ 0&0&a_2&0 \end{array} \right]$}
 \\
\hline
\end{tabular}
\end{spacing}
\caption{{$4$-dimensional metric Lie algebras that admit a parallel tensor not multiple of a complex structure}}
\label{tabla1}
\end{table}
 
\begin{proof} 
Let $(\ggo,\ip)$ be a non-abelian 4-dimensional metric Lie algebra, and let $H$ be any skew-symmetric endomorphism. Fix an orthonormal basis $\{e_1,f_1,e_2,f_2\}$ such that
$H(e_i)=a_if_i$, $H(f_i)=-a_ie_i$ for $i=1,2$.
Assume now that $H$ is parallel, which means that $\nabla_xH=H\nabla_x$ for all $x\in\ggo$, and assume also that $H$ is not a multiple of a complex structure, which implies that $|a_1|\neq |a_2|$. Using Lemma \ref{lemautil},  $\nabla_x$ has the form (with respect to the fixed basis)
\begin{align}\label{nabla in term of alpha and beta}
\nabla_{x}=\left[ \begin{array}{cccc}\ 0&- \alpha(x)&0&0\\ \alpha(x)&0&0&0 \\ 0&0&0&-\beta(x)\\
0&0& \beta(x)&0 \end{array}\right], 
\end{align}
for some linear forms $\alpha$ and $\beta$ on $\ggo$ that are not zero simultaneously. 
The Lie brackets can be expressed in terms of $\alpha$ and $\beta$ using that the Levi-Civita connection is torsion-free:
\begin{align}\label{Lie brackets in terms of alfa and beta}
[e_1,f_1]&=-\alpha(e_1)e_1-\alpha(f_1)f_1,&
[e_1,e_2]&=\beta(e_1)f_2-\alpha(e_2)f_1,  \\
\nonumber [e_1,f_2]&=-\beta(e_1)e_2-\alpha(f_2)f_1, &
[f_1,e_2]&=\beta(f_1)f_2+\alpha(e_2)e_1,  \\
\nonumber [f_1,f_2]&=-\beta(f_1)e_2+\alpha(f_2)e_1, &
[e_2,f_2]&=-\beta(e_2)e_2-\beta(f_2)f_2.
\end{align}
Applying  the Jacobi identity several times we obtain the following relations:
\begin{multicols}{2}
\noindent
\begin{align}
\label{jacobi1}
\alpha(e_1)\beta(e_1)+\alpha(f_1)\beta(f_1)=0
\end{align}
\begin{align} 
\label{jacobi2}
\alpha(e_1)\alpha(e_2)+\alpha(f_2)\beta(f_1)=0
\end{align}
\begin{align}
\label{jacobi3}
-\alpha(e_2)\alpha(f_1)+\alpha(f_2)\beta(e_1)=0
\end{align}
\begin{align}
\label{jacobi4}
\alpha(e_1)\alpha(f_2)-\alpha(e_2)\beta(f_1)=0
\end{align}
\begin{align}
\label{jacobi5}
\alpha(f_1)\alpha(f_2)+\alpha(e_2)\beta(e_1)=0
\end{align}
\begin{align}
\label{jacobi7}
\beta(e_2)\alpha(e_2)+\beta(f_2)\alpha(f_2)=0
\end{align}
\begin{align}
\label{jacobi6}
\beta(e_2)\beta(e_1)+\beta(f_1)\alpha(f_2)=0
\end{align}
\begin{align}
\label{jacobi8}
-\beta(e_1)\beta(f_2)+\beta(f_1)\alpha(e_2)=0
\end{align}
\begin{align}
\label{jacobi9}
\beta(e_2)\beta(f_1)-\beta(e_1)\alpha(f_2)=0
\end{align}
\begin{align}
\label{jacobi10}
\beta(f_2)\beta(f_1)+\beta(e_1)\alpha(e_2)=0.
\end{align}
\end{multicols}


We first rewrite \eqref{jacobi1}-\eqref{jacobi10} as matrix products and  computation of determinants. 
Conditions (\ref{jacobi1}) and (\ref{jacobi7}) can be written as
\begin{align}\label{det U=0, det V=0}
\det \underbrace{\left(\begin{matrix}
\alpha(e_1)&-\beta(f_1)\\ \alpha(f_1)&\beta(e_1)
\end{matrix} \right)}_{=:U}=0, \quad \det \underbrace{\left(\begin{matrix}
\beta(e_2)&-\alpha(f_2)\\ \beta(f_2)&\alpha(e_2)
\end{matrix} \right)}_{=:V}=0.
\end{align}
Conditions (\ref{jacobi2})-(\ref{jacobi5})  can be written as:
\begin{align}\label{AB=0}
\underbrace{\left(
\begin{matrix}
\alpha(e_2)&\alpha(f_2)\\ \alpha(f_2)&-\alpha(e_2)
\end{matrix}\right)}_{=:A}\underbrace{\left(
\begin{matrix}
\alpha(e_1)&\beta(e_1)\\ \beta(f_1)&\alpha(f_1)
\end{matrix}
\right)}_{=:B}=0_{2\times 2},
\end{align}
Similarly, (\ref{jacobi6})-(\ref{jacobi9}) can be written as
\begin{align}\label{CD=0}
\underbrace{\left(\begin{matrix}
\beta(e_1)&\beta(f_1)\\ \beta(f_1)&-\beta(e_1)
\end{matrix}\right)}_{=:C}\underbrace{\left(
\begin{matrix}
\beta(e_2)&\alpha(e_2)\\ \alpha(f_2)&\beta(f_2)
\end{matrix}
\right)}_{=:D}=0_{2\times 2}.
\end{align}
Note that $\det A=-(\alpha(e_2)^2+\alpha(f_2)^2)$ and $\det C=-(\beta(e_1)^2+\beta(f_1)^2)$, from which it follows that $A$ is either zero or invertible, and similar with $C$. 
For the rest of the discussion we consider the inner product $\langle \cdot,\cdot \rangle$ on $\ggo^*$ where $\{e^1,f^1,e^2,f^2\}$, the dual basis of $\{e_1,f_1,e_2,f_2\}$, is orthonormal.

If $C\neq 0$, then it is invertible and (\ref{CD=0}) implies that $D=0$, that is $\alpha|_{\on{Span}\{e_2,f_2\}}=\beta|_{\on{Span}\{e_2,f_2\}}=0$. 
From $\det U=0$ we obtain that $(\alpha(e_1),\alpha(f_1))=\mu(-\beta(f_1),\beta(e_1))$ for some $\mu$. We consider a new orthonormal basis $\{e_1',f_1',e_2,f_2\}$ by setting $e_1':=\frac{\beta(e_1)e_1+\beta(f_1)f_1}{\|\beta\|}$ and $f_1':= \frac{-\beta(f_1)e_1+\beta(e_1)f_1}{\|\beta \|}$. One readily checks that the Lie brackets are given by:  $[e_1',f_1']=-\mu\|\beta\|f_1' $, $[e_1',f_2]=-\|\beta\|e_2 $, $[e_1',e_2]=\|\beta\|f_2 $, and that $H e_1'=a_1f_1'$ and $Hf_1'=-a_1e_1'$. We now analyze the following two cases:
\begin{enumerate}[(i)] 
\item Case $\mu=0$:  We  redefine $\{e_1,f_1,e_2,f_2\}$ as $\{-\frac{e_1'}{\|\beta\|},-\frac{f_1'}{\|\beta\|},\frac{e_2}{\|\beta\|},\frac{f_2}{\|\beta\|}\}$. The Lie brackets are now given by: $[e_1,e_2]=-f_2$ and $[e_1,f_2]=e_2$, and 
$H e_i=a_if_i$, $Hf_i=-a_ie_i$ for $i=1,2$.  Note that this metric Lie algebra is listed in the first row of Table \ref{tabla1} with $t=\frac{1}{\|\beta\|^2}$.
\item Case $\mu\neq 0$: We set $\lambda=|\mu|$ and $\epsilon=-\sign(\mu)$,  redefine  $\{e_1,f_1,e_2,f_2\}$ as $\{\epsilon\frac{e_1'}{\|\beta\|}, \epsilon\frac{f_1'}{\|\beta\|}, \epsilon\frac{e_2}{\|\beta\|},$  $\epsilon\frac{f_2}{\|\beta\|} \}$. The Lie brackets are $[e_1,f_1]=\lambda f_1$, $[e_1,f_2]=e_2$, $[e_1,e_2]=-f_2$, and $H e_i=a_i f_i$, $Hf_i=-a_i e_i$ for $i=1,2$.
Note that this is the metric Lie algebra of the third row of Table \ref{tabla1} with $t=\frac{1}{\|\beta\|^2}$.
\end{enumerate}

If $A\neq 0$, then we interchange $(e_1,f_1,e_2,f_2,\alpha,\beta, a_1,a_2)\leftrightarrow (e_2,f_2,e_1,f_1,\beta,\alpha,a_2,a_1)$ and the conclusion will be the same as in the case $C\neq 0$.

Assume finally that $A=C=0$, that is, $\alpha|_{\on{Span}\{e_2,f_2\}}=\beta|_{\on{Span}\{e_1,f_1\}}=0$. According to (\ref{Lie brackets in terms of alfa and beta}), the Lie brackets are given by $[e_1,f_1]=-\alpha(e_1)e_1-\alpha(f_1)f_1$ and $[e_2,f_2]=-\beta(e_2)e_2-\beta(f_2)f_2$.
We may assume that $\beta\neq 0$, otherwise we interchange $(e_1,f_1,e_2,f_2,\alpha,\beta,a_1,a_2)\leftrightarrow (e_2,f_2,e_1,f_1,\beta,\alpha,a_2,a_1)$ and arrive to this situation. We consider two cases:
\begin{enumerate}[(i)]
    \item Case $\alpha=0$: We consider a new orthonormal basis $\{e_1,f_1,e_2',f_2'\}$
    by setting  $e_2':=\frac{-\beta(f_2)e_2+\beta(e_2)f_2}{\|\beta\|}$ and $f_2':=\frac{-\beta(e_2)e_2-\beta(f_2)f_2}{\|\beta\|}$. The Lie brackets are given by $[e_2',f_2']=\|\beta\| f_2'$, and $He_2'=a_2f_2'$, $Hf_2'=-a_2e_2'$. We redefine $\{ e_1,f_1,e_2,f_2\}$ as  $\{-\frac{e_1}{\|\beta\|},-\frac{f_1}{\|\beta\|},\frac{e_2}{\|\beta\|},\frac{f_2}{\|\beta\|}\}$. The Lie brackets are now given by $[e_2,f_2]=f_2$.  This is the metric Lie algebra listed in the second row of Table \ref{tabla1} with  $t=\frac{1}{\|\beta\|^2}$.
    \item Case $\alpha\neq 0$: We consider a new orthonormal basis $\{e_1',f_1',e_2',f_2'\}$ by setting $e_1'=\frac{\alpha(f_1)e_1-\alpha(e_1)f_1}{\|\alpha\|}$, $f_1'=\frac{\alpha(e_1)e_1+\alpha(f_1)f_1}{\|\alpha\|}$, $e_2'=\frac{\beta(f_2)e_2-\beta(e_2)f_2}{\|\beta\|}$,  $f_2'=\frac{-\beta(e_2)e_2-\beta(f_2)f_2}{\|\beta\|}$.
 The Lie brackets are given by: $[e_1',f_1']=\|\alpha\| f_1'$, $[e_2',f_2']=\|\beta\| f_2'$, and $H e_i'=a_if_i'$ and $Hf_i'=-a_ie_i'$, $i=1,2$. We may assume that $\|\alpha\|\leq \|\beta\|$, otherwise we interchange $(e_1,f_1,e_2,f_2,\alpha,\beta)\leftrightarrow (e_2,f_2,e_1,f_1,\beta,\alpha)$. 
We now redefine $\{e_1,f_1,e_2,f_2\}$ as $\{\frac{e_1'}{\|\alpha\|},\frac{f_1'}{\|\alpha\|},\frac{e_2'}{\|\beta\|},\frac{f_2'}{\|\beta\|}\}$. The Lie brackets are now given by: $[e_1,f_1]=f_1$, $[e_2,f_2]=f_2$. We obtain the metric Lie algebra listed in the fourth row of Table \ref{tabla1}, with $t=\frac{1}{\|\alpha\|^2}$ and $s=\frac{\|\alpha\|^2}{\|\beta\|^2}$. Note that $He_i=a_if_i$ and $Hf_i=-a_ie_i$.
\end{enumerate}
\color{black}

We have completed all the rows of Table \ref{tabla1}. 

Finally, fix a metric Lie algebra of Table \ref{tabla1}. Suppose $T$ is any a skew-symmetric endomorphism given as in third column of Table \ref{tabla1} for the respective metric Lie algebra. Then computing $\nabla_{e_i},\nabla_{f_i}$ 
  it is straightforward to check that $T$ commutes with $\nabla_{e_i},\nabla_{f_i}$. Thus $T$ is parallel. 


\end{proof}
\begin{rem}\label{a1a2}
When  $|a_1|= |a_2|$ in each metric Lie algebra of Table \ref{tabla1}, $H$ is a multiple of a complex structure. This case was studied in \cite{Ov} as we said previously. Furthermore, the metric Lie algebras of Table \ref{tabla1} are the same metric Lie algebras of the first four rows of Table \ref{tabla11}.
\end{rem}

On a fixed metric Lie algebra $(\ggo,\ip)$,  the set of parallel skew-symmetric endomorphisms form a vector space. As a consequence of Theorem \ref{dim4} and Remark \ref{a1a2} we obtain the following corollary:
\begin{cor}
The vector space of parallel skew-symmetric endomorphisms on each metric Lie algebra of Table \ref{tabla1} has dimensi\'on $2$. 

\end{cor}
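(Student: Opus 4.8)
The plan is to identify the space $\Pkr$ of parallel skew-symmetric endomorphisms with the centralizer, inside the skew-symmetric endomorphisms, of the family $\{\nabla_x : x\in\ggo\}$, and to show that this centralizer is exactly the two-parameter family appearing in the third column of Table \ref{tabla1}. Recall from the Preliminaries that $H$ is parallel if and only if $H$ is skew-symmetric and commutes with every $\nabla_x$; since this is a linear condition on $H$, the set $\Pkr$ is a vector space, so it only remains to pin down its dimension by a two-sided bound.

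For the lower bound $\dim\Pkr\ge 2$, I would invoke the converse established in the last paragraph of the proof of Theorem \ref{dim4}: for each metric Lie algebra of Table \ref{tabla1}, every endomorphism
\[ H(a_1,a_2)=\left[\begin{array}{cccc}0&-a_1&0&0\\ a_1&0&0&0\\ 0&0&0&-a_2\\ 0&0&a_2&0\end{array}\right], \qquad a_1,a_2\in\RR, \]
commutes with all the $\nabla_{e_i},\nabla_{f_i}$ and is therefore parallel (the verification there never uses $|a_1|\ne|a_2|$). These span the $2$-dimensional subspace $\Span\{H(1,0),H(0,1)\}\subseteq\Pkr$.

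For the upper bound $\dim\Pkr\le 2$, the key observation is that in each of the four rows there is a single $x_0\in\ggo$ whose connection operator is a nonzero rotation in the plane $\Span\{e_2,f_2\}$ and vanishes on $\Span\{e_1,f_1\}$. Concretely, a direct computation with the Koszul formula \eqref{koszul} — already carried out in proving Theorem \ref{dim4} — gives $x_0=e_1$ for rows $1$ and $3$ and $x_0=f_2$ for rows $2$ and $4$, with
\[ \nabla_{x_0}=\left[\begin{array}{cccc}0&0&0&0\\ 0&0&0&0\\ 0&0&0&-t_0\\ 0&0&t_0&0\end{array}\right], \qquad t_0\ne 0. \]
(In row $4$ I would first rescale each basis vector to make the basis orthonormal, which preserves the block forms of both $\nabla_{x_0}$ and $H(a_1,a_2)$.) This $\nabla_{x_0}$ is precisely the matrix $B$ of Lemma \ref{lemautil} with $s=0$ and $t=t_0$, so that $|s|\ne|t|$. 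Since any $H\in\Pkr$ commutes with $\nabla_{x_0}$, Lemma \ref{lemautil} forces $H$ to be block-diagonal of the same form, i.e.\ $H=H(a_1,a_2)$ for some $a_1,a_2\in\RR$. Hence $\Pkr\subseteq\Span\{H(1,0),H(0,1)\}$, and combining with the lower bound yields $\dim\Pkr=2$.

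The only genuine computation is locating the operator $\nabla_{x_0}$ in each row, and this is the step I would regard as the main (though routine) obstacle: it amounts to evaluating the Koszul formula on the listed brackets and checking that the resulting connection operator is a pure rotation in the second plane. Everything else is an immediate application of Lemma \ref{lemautil}, which conveniently already covers the degenerate rotation speed $s=0$, so that a single one-plane rotation suffices to block-diagonalize any commuting skew-symmetric endomorphism.
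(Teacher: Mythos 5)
Your proof is correct, and it follows a genuinely different (and in one respect tighter) route than the paper. The paper offers no standalone argument: it states the corollary as a consequence of Theorem \ref{dim4} together with Remark \ref{a1a2}, i.e.\ the parallel endomorphisms that are not multiples of a complex structure are exactly the $H(a_1,a_2)$ with $|a_1|\neq|a_2|$ by the theorem, while the remaining ones ($|a_1|=|a_2|$) are multiples of complex structures handled by Ovando's K\"ahler classification, whose first four rows match the metric Lie algebras of Table \ref{tabla1}. You instead prove the upper bound directly by computing the centralizer: you locate, in each row, a single element $x_0$ whose connection operator $\nabla_{x_0}$ is a pure rotation in $\on{Span}\{e_2,f_2\}$ (your identifications $x_0=e_1$ for rows $1$ and $3$, with $\alpha(e_1)=0$, $\beta(e_1)=-1$, and $x_0=f_2$ for rows $2$ and $4$, with $\beta(f_2)=-1$, all check out against the Koszul formula, and the brackets never mix the two planes), and then apply Lemma \ref{lemautil} with $s=0$, $t=t_0\neq 0$ --- which is indeed within the lemma's hypothesis $|s|\neq|t|$ --- to force any commuting skew-symmetric endomorphism into the block form $H(a_1,a_2)$. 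Your rescaling remark for row $4$ correctly handles the fact that the Table \ref{tabla1} basis there is orthogonal but not orthonormal, so that matrix skew-symmetry in the sense of the lemma only holds after normalization. What your approach buys is twofold: it is self-contained (no appeal to Ovando's classification for the $|a_1|=|a_2|$ case), and it quietly resolves a subtlety the paper glosses over --- Theorem \ref{dim4} produces a basis adapted to a given $H$, so concluding that \emph{every} parallel endomorphism has the stated form in the single fixed basis of Table \ref{tabla1} is not literally immediate from the theorem's statement, whereas your centralizer computation in the fixed basis gives exactly this. Your lower bound, citing the converse paragraph at the end of the proof of Theorem \ref{dim4} and noting that the verification there never uses $|a_1|\neq|a_2|$, matches the paper.
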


From Theorem \ref{dim4} and comparing Table \ref{tabla1} with Table \ref{tabla11}, we have:
\begin{cor}
Let $\ggo$ be one of the Lie algebras  $\mathfrak{d}_{4,\frac{1}{2}},\mathfrak{d}_{4,2}, \mathfrak{d}_{4,\frac{\delta}{2}}'$ with $\delta>0$, and let $\ip$ be one of the metrics of Table \ref{tabla11}. Then the only parallel skew-symmetric endomorphisms on $(\ggo,\ip)$ are multiple of complex structures. Thus the vector space of parallel skew-symmetric endomorphisms on these metric Lie algebras is $1$-dimensional.
\end{cor}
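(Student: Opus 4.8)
The plan is to treat the two assertions separately: the first follows directly from the classification already in hand, and the second from a dichotomy argument. \emph{Every parallel skew-symmetric endomorphism is a multiple of a complex structure.} Suppose not, so that $(\ggo,\ip)$ carries a parallel skew-symmetric $H$ that is not a multiple of a complex structure. Then Theorem \ref{dim4} applies and exhibits an orthogonal basis in which $(\ggo,\ip,H)$ coincides, up to isometric isomorphism and scaling, with one of the four entries of Table \ref{tabla1}; in particular $\ggo$ would be isomorphic to one of $\RR\times\mathfrak{e}(2)$, $\RR^2\times\aff(\RR)$, $\mathfrak{r}'_{4,\lambda,0}$ or $\aff(\RR)\times\aff(\RR)$. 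None of these is isomorphic to $\mathfrak{d}_{4,\frac{1}{2}}$, $\mathfrak{d}_{4,2}$ or $\mathfrak{d}'_{4,\lambda}$: the three $\mathfrak{d}$-algebras have a non-abelian ($\mathfrak{h}_3$-type) nilradical whereas each of the four algebras above has abelian nilradical, and in any event all the relevant four-dimensional solvable algebras are pairwise non-isomorphic by \cite[Theorem 1.5]{ABDO}. This contradiction proves the claim.

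For the dimension statement, write $\Pkr$ for the vector space of parallel skew-symmetric endomorphisms of $(\ggo,\ip)$. Each algebra in question is Kähler via the structure $J$ of Table \ref{tabla11}, so $\RR J\subseteq\Pkr$ and $\dim\Pkr\ge 1$; the point is to rule out $\dim\Pkr\ge 2$. Assume it and choose $H\in\Pkr$ independent from $J$. By the first part $H$ is a nonzero multiple of a complex structure, so after rescaling $K:=H\in\Pkr$ is a complex structure with $K\neq\pm J$. Since $J$ and $K$ each commute with every $\nabla_x$, so do $JK$ and $KJ$; hence the symmetric endomorphism $S:=JK+KJ$ is again parallel, and I split the argument according to it.

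If $S$ is not a scalar multiple of the identity, its eigenspaces give a nontrivial $\nabla$-invariant orthogonal splitting of $\ggo$, so the associated simply connected $(G,g)$ is a Riemannian product. Every element of $\Pkr$ commutes with the (central) projections onto the de Rham factors and is therefore block diagonal; a short inspection of the possible factor dimensions in dimension four shows that, unless $(G,g)$ is flat, the hypothesis $\dim\Pkr\ge 2$ forces a $2+2$ splitting into two surfaces, whose area forms $J_1,J_2$ yield the parallel endomorphisms $aJ_1\oplus bJ_2$; taking $a^2\neq b^2$ produces a parallel skew-symmetric endomorphism that is not a multiple of a complex structure, contradicting the first part. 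If instead $S=cI$, then $|c|<2$ (otherwise $K=\pm J$), and $\widetilde K:=(K+\tfrac{c}{2}J)/\sqrt{1-c^2/4}\in\Pkr$ is a complex structure anticommuting with $J$; thus $J,\widetilde K,J\widetilde K$ form a parallel quaternionic triple, the holonomy lies in $Sp(1)$, and $(G,g)$ is hyperkähler, hence Ricci-flat.

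The main obstacle is exactly this quaternionic alternative, since a quaternionic triple does not itself contradict the first part; together with the flat subcase above, the decisive extra input is that these metrics are \emph{not} flat. I would close both subcases at once by recalling that a homogeneous Ricci-flat Riemannian manifold is flat (Alekseevskii--Kimelfeld), so in each problematic subcase $(G,g)$ would be flat, and then checking — by a direct Koszul computation of the sectional curvature, equivalently by Milnor's description of flat left-invariant metrics — that $\mathfrak{d}_{4,\frac{1}{2}}$, $\mathfrak{d}_{4,2}$ and $\mathfrak{d}'_{4,\lambda}$ with the metrics of Table \ref{tabla11} are non-flat. This contradiction excludes $\dim\Pkr\ge 2$ and gives $\dim\Pkr=1$. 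Alternatively, and in the computational spirit of Theorem \ref{dim4}, one can simply compute the four operators $\nabla_{e_i}$ for each family and verify that their common commutant among skew-symmetric endomorphisms is exactly $\RR J$, which settles the dimension and exhibits the non-flatness directly.
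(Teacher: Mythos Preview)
Your proof is correct. The first assertion is argued exactly as in the paper, which offers only the sentence ``From Theorem \ref{dim4} and comparing Table \ref{tabla1} with Table \ref{tabla11}'': your contrapositive use of Theorem \ref{dim4}, together with the observation that the $\mathfrak{d}$-algebras are not isomorphic to any algebra in Table \ref{tabla1}, is precisely that comparison.

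For the dimension claim the paper writes nothing beyond ``Thus'', so you are supplying an argument the paper leaves implicit. Your dichotomy on $S=JK+KJ$ works, and Case 1 can in fact be shortened: since $J$ commutes with $S$, the eigenspaces of $S$ are $J$-invariant and hence even-dimensional, so one gets a parallel $2+2$ splitting $V_1\oplus V_2$ directly (no appeal to de Rham and no flat exception); then $J$ composed with the parallel projector onto $V_1$ is a parallel skew-symmetric endomorphism with eigenvalues $\pm i,0,0$, contradicting the first part outright. Your Case 2 chain (hyperk\"ahler $\Rightarrow$ Ricci-flat $\Rightarrow$ flat by Alekseevskii--Kimel'fel'd, then non-flatness of the $\mathfrak{d}$-metrics) is valid but imports an external structure theorem. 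The computational alternative you sketch at the end is closer in spirit to the paper: the operators $\nabla_{e_i}$ for $(\mathfrak{d}_{4,2},\ip_1)$ are written out explicitly in the proof of Proposition \ref{irreducible}, and one can read off from them that their common skew-symmetric commutant is the line $\RR J$; the other two families are analogous.
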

We present the information of the above corollary in  Table \ref{tabla de deltas}. We choose a presentation so that the outputs of Table \ref{tabla1} and Table \ref{tabla de deltas} look similar. Namely, we fix the structure coefficients of the Lie algebras and vary the metric, and we also write all the non-zero parallel tensors, not just the complex structures. \color{black} 

\begin{table}[h!]
\centering
\begin{spacing}{1.5}
\begin{tabular}{l l l}
\hline
\hline
Lie algebra &  Metric   & Parallel tensor  \\
\hline \hline
 $\begin{matrix} \mathfrak{d}_{4,\frac{1}{2}}:\\
 [e_1,e_2]=e_3,  \\ [e_4,e_1]=\frac{1}{2}e_1, \\ [e_4,e_2] =\frac{1}{2}e_2, \\ [e_4,e_3]=e_3 \end{matrix}   $
  & {\small $\ip_t=\left[\begin{matrix}
  t&0&0&0\\ 0&t&0&0\\ 0&0&t&0\\ 0&0&0&t
\end{matrix}\right]$, $t>0$   }     & {\small $
c\left[\begin{matrix}
0&-1&0&0\\ 1&0&0&0\\ 0&0&0&1\\ 0&0&-1&0
\end{matrix}\right]$}, {\small  $c\in \mathbb{R}^*$ }\\
 \hline
  $ \begin{matrix} \mathfrak{d}_{4,2}:\\
  [e_1,e_2]=e_3, \\ [e_4,e_1]=e_1\\ [e_4,e_2] =-\frac{1}{2}e_2, \\   [e_4,e_3]=\frac{1}{2}e_3,\end{matrix} $ & {\small $\ip_t=\left[\begin{matrix}
  t&0&0&0\\ 0&t&0&0\\ 0&0&t&0\\ 0&0&0&t
\end{matrix}\right]$, $t>0$   }  &{\small $
c\left[\begin{matrix}
0&0&0&-1\\ 0&0&-1&0\\ 0&1&0&0\\ 1&0&0&0
\end{matrix}\right]$},
{\small  $c\in \mathbb{R}^*$ } 
 \\ \hline
  $   \begin{matrix} {\mathfrak{d}'_{4,\frac{\delta}{2}}},\quad \delta >0: \\ [e_1,e_2]=e_3,  
 \\ [e_4,e_1]=\frac{1}{2}e_1-\frac{1}{\delta}e_2, \\ [e_4,e_2]=\frac{1}{\delta}e_1+\frac{1}{2}e_2, \\ [e_4,e_3]=e_3 \end{matrix} $ &{\small $\ip_t=\left[\begin{matrix}
  t&0&0&0\\ 0&t&0&0\\ 0&0&t&0\\ 0&0&0&t
\end{matrix}\right]$, $t>0$   }  & 
 {\small $
c\left[\begin{matrix}
0&-1&0&0\\ 1&0&0&0\\ 0&0&0&1\\ 0&0&-1&0
\end{matrix}\right]$}, {\small  $c\in \mathbb{R}^*$ }
 \\
   
     \hline
   \end{tabular}
   \caption{Parallel tensors on $(\dd_{4,\frac{1}{2}},\ip_t)$, $(\dd_{4,2},\ip_t)$, $(\dd'_{4,\frac{\delta}{2}},\ip_t)$}
   \label{tabla de deltas}
    \end{spacing}
   \end{table}


\medskip

We now address the problem of distinguishing the metric Lie algebras and the parallel tensors presented in Table \ref{tabla1} and Table \ref{tabla de deltas}.
Two metric Lie algebras $(\ggo,\ip)$ and $(\ggo',\ip')$ are said to be equivalent if there exists an isometric Lie algebra isomorphism $\ggo\cong\ggo'$. In notation, $(\ggo,\ip)\sim (\ggo',\ip')$. Proposition \ref{equivalencias1} below shows that the metric Lie algebras of Table \ref{tabla1} and Table \ref{tabla de deltas} are all pairwise non-equivalent.
In addition, for a fixed metric Lie algebra $(\ggo,\ip)$, it describes the moduli space of the parallel endomorphisms according to Definition \ref{equivalentes}.
\begin{rem}\label{invariantes clasicos}
During the proof of the following proposition we will use that if $\phi:(\ggo,\ip)\to (\ggo,\ip')$ is an isometric Lie algebra automorphism, then the Ricci operators satisfy the relation $\on{Ric}_{\ip'}\phi=\phi\on{Ric}_{\ip}$; in particular, 
\begin{enumerate}
    \item[(a)] $\on{Ric}_{\ip}$ and $\on{Ric}_{\ip'}$ have the same characteristic polynomial;
    \item[(b)] if in addition $\ip=\ip'$, then $\phi$ preserves the eigenspaces of $\on{Ric}_{\ip}$.
\end{enumerate}
We will also use that if $H_1$ and $H_2$ are equivalent parallel endomorphisms on $(\ggo,\ip)$, so that there exists an isometric Lie algebra automorphism $\phi:\ggo\to\ggo$ such that $H_2\phi=\phi H_1$, then
\begin{enumerate}
    \item[(c)] $H_1$ and $H_2$ have the same characteristic polynomial.
    \item[(d)] $H_1^2$ and $H_2^2$ have the same eigenvalues and $\phi$ preserves the corresponding eigenspaces.
    \item[(e)] Moreover, if $H_1$ preserves $[\ggo,\ggo]$ then the same holds for $H_2$, and the restrictions of $H_1$ and $H_2$ to $[\ggo,\ggo]$ have the same characteristic polynomial. An analogous conclusion holds if $H_1$ preserves $[\ggo,[\ggo,\ggo]]$ or $[[\ggo,\ggo],[\ggo,\ggo]]$, or $\mathfrak{z}(\ggo)$, etc.
\end{enumerate}
 
\end{rem}
\begin{prop}
\label{equivalencias1}
The metric Lie algebras of Table \ref{tabla1} and Table \ref{tabla de deltas} are pairwise non-equivalent.  Given one of these metric Lie algebras, any  parallel skew-symmetric endomorphism  is equivalent to exactly one of the endomorphisms given in Table \ref{tabla113}.

\end{prop}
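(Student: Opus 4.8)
The plan is to treat the two assertions separately: first the pairwise non-equivalence of the metric Lie algebras of Tables \ref{tabla1} and \ref{tabla de deltas}, and then the classification of the parallel endomorphisms on each fixed one. For the non-equivalence, I would first separate the entries by the isomorphism type of the underlying Lie algebra. The seven families $\RR\times\mathfrak{e}(2)$, $\RR^2\times\aff(\RR)$, $\mathfrak{r}'_{4,\lambda,0}$, $\aff(\RR)\times\aff(\RR)$, $\mathfrak{d}_{4,1/2}$, $\mathfrak{d}_{4,2}$, $\mathfrak{d}'_{4,\lambda}$ are pairwise non-isomorphic by \cite[Theorem 1.5]{ABDO}, and for the one-parameter families $\mathfrak{r}'_{4,\lambda,0}$ and $\mathfrak{d}'_{4,\lambda}$ the parameter $\lambda$ is itself an isomorphism invariant; this disposes of every pair except those inside the single row $\aff(\RR)\times\aff(\RR)$, whose metric $\ip_{t,s}$ still carries the parameter $s\in(0,1]$. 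Here I would invoke Remark \ref{invariantes clasicos}(a): the Ricci operator of $(\aff(\RR)\times\aff(\RR),\ip_{t,s})$ is block-diagonal with the two distinct eigenvalues coming from the two hyperbolic factors, and the ratio of these eigenvalues equals $s$. Since a homothety rescales all Ricci eigenvalues by a common factor, this ratio is a homothety invariant, so distinct values of $s$ give non-equivalent metric Lie algebras.

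For the classification of the parallel endomorphisms, I would fix one metric Lie algebra. By Theorem \ref{dim4} together with the corollary on the dimension of the space of parallel tensors, every parallel skew-symmetric $H$ is, in the adapted orthogonal basis, the block endomorphism $He_i=a_if_i$, $Hf_i=-a_ie_i$ for the entries of Table \ref{tabla1} (and $H=cJ$ for the listed complex structure $J$ in the entries of Table \ref{tabla de deltas}, where the space is one-dimensional). Equivalence of two such endomorphisms means conjugation by an isometric automorphism $\phi\in\Aut(\ggo)\cap O(\ggo,\ip)$, so the task is to compute the action of this group on the pair $(a_1,a_2)$ (resp.\ on $c$) and read off its orbits. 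The absolute values $|a_1|,|a_2|$ are pinned down by invariants: Remark \ref{invariantes clasicos}(c) shows that the characteristic polynomial $(x^2+a_1^2)(x^2+a_2^2)$ of $H$ is preserved, and Remark \ref{invariantes clasicos}(e) lets me attach one of the two squares to a distinguished $H$-invariant characteristic subspace — the derived algebra or centre in the cases $\RR^2\times\aff(\RR)$ and $\aff(\RR)\times\aff(\RR)$, and the two-plane on which $\ad$ acts as a rotation in the cases $\RR\times\mathfrak{e}(2)$ and $\mathfrak{r}'_{4,\lambda,0}$ — thereby making $(a_1^2,a_2^2)$ an ordered invariant, except when two such subspaces are interchangeable.

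The remaining and most delicate point is to decide which sign changes $a_i\mapsto-a_i$ and which block interchange $a_1\leftrightarrow a_2$ are actually realized by isometric automorphisms. I would exhibit explicit orthogonal automorphisms: reversing the orientation of an $\aff$-plane (e.g.\ $f_i\mapsto-f_i$) or of an abelian/central plane is always an automorphism and flips the corresponding $a_i$, while the interchange $a_1\leftrightarrow a_2$ in $\aff(\RR)\times\aff(\RR)$ is orthogonal only when the two ideals have equal metric size, i.e.\ when $s=1$. The crux is the opposite phenomenon for a rotation plane: in $\mathfrak{r}'_{4,\lambda,0}$ the relation $[e_1,f_1]=\lambda f_1$ with $\lambda>0$ forbids sending $e_1\mapsto-e_1$, and since an orientation-reversing map of the rotation plane anticommutes with $\ad_{e_1}$, no isometric automorphism can reverse the sign of the rotation-plane parameter; hence that sign is an additional invariant, producing two inequivalent families (matching $J_1,J_2$ in Table \ref{tabla11}). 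By contrast, in $\RR\times\mathfrak{e}(2)$ the direction $f_1$ is central, the map $e_1\mapsto-e_1,\ f_2\mapsto-f_2$ is an automorphism, and both signs can be flipped.

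The same dichotomy, via the anti-holomorphic-automorphism question $\phi J=-J\phi$, settles whether $cJ\sim-cJ$ on the entries of Table \ref{tabla de deltas} (the char polynomial forces $c'=\pm c$, so only the sign is at issue). Assembling these sign and swap rules yields, on each metric Lie algebra, a unique normal form for $(a_1,a_2)$ (resp.\ for $c$), which is exactly the list of Table \ref{tabla113}. I expect the main obstacle throughout to be the careful determination of the isometric automorphism group and, in particular, the rotation-plane sign obstruction just described, which is what distinguishes the families admitting one equivalence class of parallel structures from those admitting two.
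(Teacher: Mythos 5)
Your second assertion (the orbit analysis of the parallel endomorphisms on each fixed metric Lie algebra) is in substance the paper's own argument: the same invariants from Remark \ref{invariantes clasicos} (characteristic polynomial of $H$, its restriction to $H$-invariant characteristic subspaces such as $[\ggo,\ggo]$ or $\mathfrak{z}(\ggo)$, the eigenvalues of $H^2$), the same explicit sign-flipping automorphisms, and the interchange at $s=1$. Your rotation-plane observation — that an orientation-reversing map of $\Span\{e_2,f_2\}$ anticommutes with $\ad_{e_1}$, while $[e_1,f_1]=\lambda f_1$, $\lambda>0$, pins $\phi(e_1)=e_1$ — is a clean conceptual packaging of what the paper does by hand for $\mathfrak{r}'_{4,\lambda,0}$ (forcing $\phi|_{\Span\{e_2,f_2\}}$ to be a rotation, hence $a_2=a_2'$ \emph{with} sign) and for $\dd'_{4,\lambda}$ (where imposing $H_1\phi=\phi H_{-1}$ kills the rotation block of $\phi$, so $H_c\not\sim H_{-c}$). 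That half is fine, modulo carrying out the $\dd$-cases in detail.

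The genuine gap is in the first assertion. Equivalence in Definition \ref{equivalentes} is an \emph{isometric} isomorphism — no rescaling — so the claim "pairwise non-equivalent" includes distinguishing $(\ggo,\ip_t)$ from $(\ggo,\ip_{t'})$ for $t\neq t'$ \emph{within each row} of both tables. Your reduction "this disposes of every pair except those inside the single row $\aff(\RR)\times\aff(\RR)$" is therefore false: every row is a $t$-family, and your homothety-invariant Ricci ratio only recovers $s$, not $t$. The paper handles this family by family via Remark \ref{invariantes clasicos}(a), reading $t$ off the actual Ricci eigenvalues ($-\tfrac{1}{t}$, $-\tfrac{\lambda^2}{t}$, $-\tfrac{3}{2t}$, $-\tfrac{1}{st}$, etc.). Worse, for $\RR\times\mathfrak{e}(2)$ this cannot be repaired by any curvature invariant: the metric $\ip_t$ is flat for every $t$, so $\Ric_t\equiv 0$. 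There the paper argues directly on the automorphism group: any isometric isomorphism $\phi:(\ggo,\ip_t)\to(\ggo,\ip_{t'})$ preserves $[\ggo,\ggo]=\Span\{e_2,f_2\}$ and $\mathfrak{z}(\ggo)=\Span\{f_1\}$, hence has block-diagonal form with $\phi(e_1)=x_1e_1$, and the bracket relation $\phi([e_1,e_2])=x_1[\phi(e_1),\phi(e_2)]$ forces $x_1=1$, while orthogonality gives $x_1^2t'=t$; hence $t=t'$. Your proposal contains no substitute for either of these steps, so as written it does not prove the first half of the proposition.
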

\begin{table}[h!]
\centering
\begin{spacing}{1.5}
\begin{tabular}{l l}
\hline
\hline
 Metric Lie algebra  & Parallel endomorphism   \\
\hline \hline 
  $( \RR \times \mathfrak{e}(2),\ip_t)$
  &   {\tiny $\left[ \begin{array}{cccc} 0&-a_1&0&0\\ a_1&0&0&0\\ 0&0&0&-a_2\\ 0&0&a_2&0 \end{array} \right]$, $a_1,a_2\geq 0$} \\
 \hline 
$
(\RR^2\times \aff(\RR),\ip_t)$
   & {\scriptsize  $\left[ \begin{array}{cccc} 0&-a_1&0&0\\ a_1&0&0&0\\ 0&0&0&-a_2\\ 0&0&a_2&0 \end{array} \right]$, $a_1,a_2\geq 0$}  \\ \hline
$ (\mathfrak{r}'_{4,\lambda,0},\ip_t)$, $ \lambda>0$ & 
{\tiny $\left[ \begin{array}{cccc} 0&-a_1&0&0\\ a_1&0&0&0\\ 0&0&0&-a_2\\ 0&0&a_2&0 \end{array} \right]$, $a_1\geq 0$} 
 \\
  \hline
 $(\aff(\RR)\times \aff(\RR),\ip_{t,s})$, $s,t>0, \, s\leq 1$
& {\tiny $\left[ \begin{array}{cccc} 0&-a_1&0&0\\ a_1&0&0&0\\ 0&0&0&-a_2\\ 0&0&a_2&0 \end{array} \right]$, $\begin{matrix} a_1,a_2\geq 0\\ \mbox{if } s<1,\ \mbox{or}\\ a_1\geq a_2\geq 0\\ \mbox{if } s=1 \end{matrix}$} \\
\hline
$(\mathfrak{d}_{4,\frac{1}{2}},\ip_t)$ & \begin{tiny}
  $
c\left[\begin{matrix}
0&-1&0&0\\ 1&0&0&0\\ 0&0&0&1\\ 0&0&-1&0
\end{matrix}\right]$, $c>0$
\end{tiny} \\ \hline
$(\mathfrak{d}_{4,2},\ip_t)$  & \begin{tiny} $
c\left[\begin{matrix}
0&0&0&-1\\ 0&0&-1&0\\ 0&1&0&0\\ 1&0&0&0
\end{matrix}\right]$, $c>0$
\end{tiny} \\ \hline 
$(\mathfrak{d}'_{4,\frac{\delta}{2}},\ip_t), \delta>0$ &\begin{tiny}
$
c\left[\begin{matrix}
0&-1&0&0\\ 1&0&0&0\\ 0&0&0&1\\ 0&0&-1&0
\end{matrix}\right]$, $c\in \mathbb{R}^*$
\end{tiny}
\\
\hline
\end{tabular}
\end{spacing}
\caption{{Non equivalent parallel tensors on non equivalent $4$-dimensional metric Lie algebras}}
\label{tabla113}
\end{table}
\begin{proof}
The Lie algebras of the first columns of Table \ref{tabla1} and Table \ref{tabla de deltas} are all pairwise non isomorphic.
We now proceed case by case.

\smallskip 
$\ggo=\RR \times \mathfrak{e}(2)$: 
    Let $\phi:(\ggo,\ip_t)\to (\ggo,\ip_{t'})$ be an isometric Lie algebra isomorphism. Since  $[\ggo,\ggo]=\Span\{e_2,f_2\}$ and  $\mathfrak{z}(\ggo)=\Span\{f_1\}$, $\phi$ has the form
\[ \phi=\left[ \begin{matrix} x_1&0&0&0 \\ 0&x_4&0&0 \\ 0&0&x_9&-x_{10} \\ 0&0&x_{10}&x_9 \end{matrix} \right],  \]
with $x_1^2t'=x_4^2t'=t$ and $(x_9^2+x_{10}^2)t'=t$. One readily checks that $\phi([e_1,e_2])=[\phi(e_1),\phi(e_2)]$, so that $x_1=1$ and hence $t=t'$. 

Now fix $t$ and suppose that $ H_{a_1,a_2}$ is a skew-symmetric endomorphism as in Table \ref{tabla1}. 
It is possibly to multiply the basis elements by $\pm 1$'s so that the Lie brackets in the new basis are the same as before and the endomorphism has
 $a_1,a_2\geq 0$. For instance, if $a_1\geq 0$ and $a_2<0$, then we do the change $(e_1,f_1,e_2,f_2)\mapsto (-e_1,-f_1,-e_2,f_2)$. 
It is only left to show that if $ H_{a_1,a_2}$ and $ H_{a_1',a_2'}$ are equivalent, with $a_i,a_i'\geq 0$, then $(a_1,a_2)=(a_1',a_2')$. By Remark \ref{invariantes clasicos}, (c), we have $(X^2+a_1^2)(X^2+a_2^2)=(X^2+a_1'^2)(X^2+a_2'^2)$; in particular $a_1^2+a_2^2=a_1'^2+a_2'^2$. 
Moreover, since both $ H_{a_1,a_2}$ and $H_{a_1',a_2'}$ preserve $[\ggo,\ggo]=\Span\{e_2,f_2\}$, Remark \ref{invariantes clasicos}, (e), tells us that $X^2+a_2^2=X^2+a_2'^2$. It follows that $a_2=a_2'$ and hence $a_1=a_1'$.

\smallskip

$\ggo=\RR^2\times \aff(\RR)$: One easily checks that the Ricci operator of $(\ggo,\ip_t)$ with respect to the orthonormal basis $\{\frac{e_1}{\sqrt{t}},\frac{f_1}{\sqrt{t}}, \frac{e_2}{\sqrt{t}},\frac{f_2}{\sqrt{t}}\}$ is given by:
\[\Ric_t=\left[  \begin{matrix}0&0&0&0\\0&0&0&0\\0&0&-\frac{1}{t}&0\\0&0&0&
-\frac{1}{t}\end{matrix}\right].\]
It follows from Remark \ref{invariantes clasicos}, (a), that $(\ggo,\ip_t)\sim(\ggo,\ip_{t'})$ if and only if $t=t'$.

 We now fix $t>0$ and suppose that $ H_{a_1,a_2}$ is a skew-symmetric endomorphism as in Table \ref{tabla1}. By multiplying $f_1$ and/or $f_2$ by $\pm 1$, which does not affect the Lie brackets, we can take $a_1,a_2\geq 0$.  Now suppose that $H_{a_1,a_2}$ and $H_{a_1',a_2'}$ are equivalent, with $a_i,a_i'\geq 0$. By Remark \ref{invariantes clasicos}, (c), $(X^2+a_1^2)(X_2^2+a_2^2)=(X^2+a_1'^2)(X^2+a_2'^2)$; in particular $a_1^2+a_2^2=a_1'^2+a_2'^2$. Moreover, since both $H_{a_1,a_2}$ and $H_{a_1',a_2'}$ preserve  $\mathfrak{z}(\ggo)=\Span\{ e_1,f_1\}$, Remark \ref{invariantes clasicos}, (e), tells us that $X^2+a_1^2=X^2+a_1'^2$. It follows that $a_1=a_1'$ and hence $a_2=a_2'$.

$\ggo=\mathfrak{r}'_{4,\lambda,0}$, $\lambda>0$: The Ricci operator of $(\ggo,\ip_t)$ with respect to the orthonormal basis $\{ \frac{e_1}{\sqrt{t}},\frac{f_1}{\sqrt{t}},\frac{e_2}{\sqrt{t}},\frac{f_2}{\sqrt{t}}\}$, is given by:
\[ \Ric_t=\left[ \begin{matrix} -\frac{\lambda^2}{t}&0&0&0\\0&0&0&0\\0&0&0&0\\0&0&0&0 \end{matrix}\right]. \] 
By Remark \ref{invariantes clasicos}, (a), we obtain that $(\ggo,\ip_t)\sim (\ggo,\ip_{t'})$ if and only if $t=t'$.

We now fix $t>0$. Let $ H_{a_1,a_2}$ be a  skew-symmetric endomorphism as in Table \ref{tabla1}. After changing $f_1$ by $-f_1$ if necessary, which does not affect the Lie brackets, we can take $a_1\geq 0$. We now assume that $H_{a_1,a_2}$ and $H_{a_1',a_2'}$ are equivalent, with $a_1,a_1'\geq 0$ and $a_2,a_2'\in\mathbb{R}$.  By Remark \ref{invariantes clasicos}, (c),  $(X^2+a_1^2)(X^2+a_2^2)=(X^2+a_1'^2)(X^2+a_2'^2)$, in particular $a_1^2+a_2^2=a_1'^2+a_2'^2$.  Now let $\phi:\ggo\to\ggo$ be an isometric isomorphism such that $\phi H_{a_1,a_2}=H_{a_1',a_2'}\phi$. 
Then $\phi$ preserves $[[\ggo,\ggo],[\ggo,\ggo]]=\Span\{e_2,f_2\}$, hence $\phi(e_2)=ue_2+vf_2$ and $\phi(f_2)=-ve_2+uf_2$ with $u^2+v^2=1$. 
Now we have $\phi H_{a_1,a_2}(e_2)=a_2\phi(f_2)=-a_2 ve_2+a_2uf_2$ and $H_{a_1',a_2'}\phi(e_2)=H_{a_1',a_2'}(ue_2+vf_2)=a_2'uf_2-a_2've_2$. From this we see that $a_2=a_2'$, and hence  $a_1=a_1'$.

\medskip

$\ggo=\aff(\RR)\times \aff(\RR)$:  The Ricci operator with respect to the orthonormal basis $\{\frac{e_1}{\sqrt{t}},\frac{f_1}{\sqrt{t}},\frac{e_2}{\sqrt{ts}},\frac{f_2}{\sqrt{ts}}\}$ is given by:
 \[\Ric_{s,t}=\left[  \begin{matrix} -\frac{1}{t}&0&0&0\\0&-\frac{1}{t}&0&0\\0&0&-\frac{1}{st}&0
\\0&0&0&
-\frac{1}{st}\end{matrix}\right].\]
It follows from Remark \ref{invariantes clasicos}, (a), that  $(\ggo,\ip_{s,t})\sim (\ggo,\ip_{s',t'})$ if and only if $t=t'$ $s=s'$ (here we are using the constraint $0<s,s'\leq 1$).

We now fix $t>0$ and $0< s\leq 1$. Let $H_{a_1,a_2}$ be a skew-symmetric endomorphism as in Table \ref{tabla1}. We can multiply $f_1$ or $f_2$ by $-1$ if necessary, without changing the Lie brackets, and take $a_1, a_2\geq 0$. If $s=1$ we can interchange $\{e_1,f_1\}$ and $\{e_2,f_2\}$ if necessary and assume that $a_1\geq a_2$. 

Now suppose that there exists an isometric isomorphism $\phi:\ggo\to\ggo$ such that
 $H_{a_1,a_2}\phi=H_{a_1',a_2'}\phi$, where $a_i,a_i'\geq 0$ and also $a_1\geq a_2$ and $a_1'\geq a_2'$ in the case $s=1$.  We have to show that $a_1=a_1'$ and $a_2=a_2'$.
 We consider two cases:
\begin{enumerate}[(i)]
    \item $s<1$: Since $\Span\{e_1,f_1\}$ and $\Span\{e_2,f_2\}$ are the eigenspaces of the Ricci operator, they are preserved by $\phi$ according to Remark \ref{invariantes clasicos}, (b). This clearly implies that $a_1=a_1'$ and $a_2=a_2'$.
    
\item  $s=1$: In this case we have to assume that $a_1\geq a_2\geq 0$, $a_1'\geq a_2'\geq 0$. By Remark \ref{invariantes clasicos}, (c), $(X^2+a_1^2)(X^2+a_2^2)=(X^2+a_1'^2)(X^2+a_2'^2)$. From this we deduce that $a_1=a_1'$ and $a_2=a_2'$.

\end{enumerate}
\medskip

$\ggo= \mathfrak{d}_{4,\frac{1}{2}}$: In the orthonormal basis 
$\{\frac{e_1}{\sqrt{t}},\frac{e_2}{\sqrt{t}},\frac{e_3}{\sqrt{t}},\frac{e_4}{\sqrt{t}}\}$, the Ricci operator is    
\[ \Ric_t=\left[ \begin{matrix} -\frac{3}{2t}&0&0&0\\ 0&-\frac{3}{2t}&0&0\\ 0&0&-\frac{3}{2t}&0\\0&0&0&-\frac{3}{2t} \end{matrix}\right]. \]

It follows from Remark \ref{invariantes clasicos}, (a), that $(\ggo,\ip_t)\sim (\ggo,\ip_{t'})$ if and only if $t=t'$. 

We now fix $t$. Given a  skew-symmetric endomorphism $H_c$ as in Table \ref{tabla de deltas}, we can change $(e_1,e_3)$ by $(-e_1,-e_3)$ if necessary, without changing the Lie brackets, and take $c>0$. Using Remark \ref{invariantes clasicos} (d), we now easily see that if $H_c$ and $H_{c'}$ are equivalent, with $c,c'>0$, then $c=c'$.
\medskip

 $\ggo=\mathfrak{d}_{4,2}$: The Ricci operator in the orthonormal basis  $\{ \frac{e_1}{\sqrt{t}},\frac{e_2}{\sqrt{t}},\frac{e_3}{\sqrt{t}},\frac{e_4}{\sqrt{t}}\}$ is
\[ \Ric_t=\left[ \begin{matrix} -\frac{3}{2t}&0&0&0\\ 0&0&0&0\\ 0&0&0&0\\0&0&0&-\frac{3}{2t} \end{matrix}\right]. \]
 It follows from Remark \ref{invariantes clasicos}, (a), that if $(\ggo,\ip_t)\sim(\ggo,\ip_{t'})$, then $t=t'$. 
 
  We now fix $t$ and let $H_c$ be an endomorphism as in Table \ref{tabla de deltas}. By changing $(e_1,e_2)$ by $(-e_1,-e_2)$ if necessary, we can take $c>0$. Finally, if $H_{c}$ and $H_{c'}$ are equivalent, then by Remark \ref{invariantes clasicos}, (d), we get $c=c'$.

$\ggo=\dd'_{4,\frac{\delta}{2}}$: We consider the orthonormal basis $\{\frac{e_1}{\sqrt{t}},\frac{e_2}{\sqrt{t}},\frac{e_3}{\sqrt{t}}, \frac{e_4}{\sqrt{t}}\}$. The Ricci operator  is given by
\[ \Ric=\left[ \begin{matrix} -\frac{3}{2t}&0&0&0\\ 0&-\frac{3}{2t}&0&0\\ 0&0&-\frac{3}{2t}&0\\0&0&0&-\frac{3}{2t} \end{matrix}\right]. \]
It follows from Remark \ref{invariantes clasicos}, (a), that if $(\ggo,\ip_t)\sim (\ggo,\ip_{t'})$, then $t=t'$.  

Fix $t>0$. Given two skew-symmetric endomorphisms $H_c$ and $H_{c'}$ as in Table \ref{tabla de deltas}, with $c,c'\in \mathbb{R}^*$, by Remark \ref{invariantes clasicos}, (d), we have $|c|=|c'|$.
 We claim that $H_c$ and $H_{-c}$ are  non-equivalent. To show this can assume that $c=1$. Suppose that there is an isometric isomorphism $\phi$ such that $H_1\phi=\phi H_{-1}$. Observe that $\ggo':=[\ggo,\ggo]=\Span\{ e_1,e_2,e_3\}$, and that $\ggo'':=[\ggo',\ggo']=\Span\{e_3\}$. Since $\phi$ preserves $\ggo'$ and is an isometry, it also preserves the orthogonal complement, that is, $\phi(e_4)=ke_4$ with $k=\pm 1$. Since it also preserves $\ggo''$, $\phi(e_3)=le_3$ with $l=\pm 1$. Now $le_3=\phi(e_3)=\phi[e_4,e_3]=[\phi(e_4),\phi(e_3)]=kle_3$, from which $k=1$. So $\phi$ can be written in the basis $\{e_1,e_2,e_3,e_4\}$ as
\begin{align*}
\phi=\left[\begin{matrix}
x_1& -x_2&0&0\\
x_2&x_{1}&0&0\\
0&0&l&0\\
0&0&0&1
\end{matrix}\right].
\end{align*}
If we impose the condition that $H_1\phi=\phi H_{-1}$ we get that $x_{1}=x_{2}=0$, which contradicts the fact that $\phi$ is inversible. Hence $H_{1}$ and $H_{-1}$ are not equivalent.

\end{proof}

\section{The de Rham decomposition of the associated simply connected Riemannian Lie groups}\label{The De Rham decomposition}

Given a Riemannian manifold $(M,g)$, the holonomy group at a point $p\in M$, denoted by $\Hol_p(M,g)$, is the group formed by parallel translations $P_{\gamma}$ around loops $\gamma:[a,b]\to M$ at $p$, with the usual product of endomorphisms. 
This is a Lie subgroup of the orthogonal group $O(T_pM)$, and if $M$ is orientable, $\Hol_p(M,g)\subseteq SO(T_pM)$. The restricted holonomy group $\on{Hol}_p^0(M,g)$ is the connected normal subgroup that results from using only contractible loops. This is a closed subgroup of $O(T_pM)$ and hence its action on $T_pM$ is completely reducible. 
If $M$ is connected, then $\Hol_p(M,g)$ (resp., $\Hol_p^0(M,g)$) is conjugate to $\Hol_q(M,g)$ (resp., $\Hol_q^0(M,g)$) for all $p,q\in M$; therefore the holonomy group and the restricted holonomy group do not depend on the base point, and they are denoted  by $\Hol(M,g)$ and $\Hol^0(M,g)$ respectively. We say that $(M,g)$ is irreducible if the action of the restricted holonomy group on $T_pM$ is irreducible.
If $M$ is simply connected, then one has $\Hol(M,g)=\Hol^0(M,g)$. If in addition $M$ is complete, then according to the de Rham theorem (\cite[Theorem 10.3.1]{Petersen}), a decomposition of $T_pM$ into a irreducible subspaces under the action of $\Hol(M,g)$ corresponds to a decomposition of $M$ as a product of irreducible Riemannian manifolds. We refer to this as the de Rham decomposition of $(M,g)$.

Given a metric Lie algebra $(\ggo, \ip)$, we denote by $(G,g)$  the associated simply connected Lie group $G$ with the corresponding  left invariant metric $g$. The pair $(G,g)$ is a complete  Riemannian manifold.

We will see next that if $(\ggo,\ip)$ belongs to Table \ref{tabla de deltas}, then $(G,g)$ is irreducible as a Riemannian manifold. One way to show this is by identifying $(G,g)$ with an already known irreducible Riemannian manifold. See Remark \ref{clase de iso de las delta}. Another possibility is by showing that $\ggo$ has no proper subspaces invariant under the action of $\Hol(G,g)$.
 Since $\Hol(G,g)=\Hol^0(G,g)$ is connected, this is equivalent to showing that $\ggo$ has no proper subspaces invariant by the elements of $\mathfrak{hol}(G,g)$, the Lie algebra of $\Hol(G,g)$.
 In order to know what $\mathfrak{hol}(G,g)$ is, we use the Ambrose-Singer Holonomy Theorem, which states that this algebra is the subalgebra of $\mathfrak{so}(\mathfrak{g},\ip)$ spanned by the curvature operators and all their covariant derivatives.
  We begin by following this approach.

\begin{prop} 
\label{irreducible}
Let $(\ggo,\la \cdot , \cdot \ra)$ be one of the metric Lie algebras of Table \ref{tabla de deltas} and let $(G,g)$ be the associated simply connected  Lie group with the corresponding left invariant metric. Then $(G,g)$ is irreducible as Riemannian manifold.
\end{prop}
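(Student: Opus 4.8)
The plan is to continue along the Ambrose--Singer route just set up. For each of $\dd_{4,\frac12}$, $\dd_{4,2}$, $\dd'_{4,\lambda}$ I would first record the skew-symmetric Levi--Civita operators $\nabla_{e_i}$ in the orthonormal basis of $(\ggo,\ip_t)$ underlying Table~\ref{tabla de deltas}. Each of these algebras contains the codimension-one ideal $\hh_3$, so writing $\ggo=\RR e_0\ltimes_{\ad_{e_0}}\hh_3$ and decomposing $\ad_{e_0}|_{\hh_3}$ into its symmetric and skew parts as in Milnor's formulas yields the $\nabla_{e_i}$ directly from the Koszul formula~(\ref{koszul}); it is convenient to use the basis adapted to the parallel complex structure $J$ of the relevant row of the table.

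Next I would assemble the curvature operators $R(e_i,e_j)=[\nabla_{e_i},\nabla_{e_j}]-\nabla_{[e_i,e_j]}\in\mathfrak{so}(\ggo,\ip)$ and, by Ambrose--Singer, adjoin their iterated covariant derivatives $(\nabla_{e_{k_1}}\cdots\nabla_{e_{k_m}}R)(e_i,e_j)$ until the span stabilizes; this span is $\mathfrak{hol}(G,g)$. The decisive step is then to prove that the only subspaces $W\subseteq\ggo$ with $\mathfrak{hol}(G,g)\,W\subseteq W$ are $0$ and $\ggo$. Since $\Hol(G,g)=\Hol^0(G,g)$ is connected, invariance under $\mathfrak{hol}(G,g)$ coincides with invariance under $\Hol(G,g)$, so this is exactly the statement that the holonomy representation is irreducible, whence $(G,g)$ is irreducible.

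I expect this last step to be the main obstacle. A priori $\mathfrak{hol}(G,g)$ could be small and fix a common $2$-plane, which is precisely the reducible scenario one must exclude; after identifying $\ggo\cong\CC^2$ via $J$ and $\mathfrak{so}(4)\cong\mathfrak{su}(2)\oplus\mathfrak{su}(2)$, the task is to verify that $\mathfrak{hol}(G,g)$ is not conjugate into $\mathfrak u(1)\oplus\mathfrak u(1)$. The family $\dd'_{4,\lambda}$, carrying the extra rotation parameter, is the most delicate; and $\dd_{4,2}$ warrants care because its Ricci operator is degenerate, with eigenvalues $-\frac{3}{2t},-\frac{3}{2t},0,0$, so the Ricci endomorphism alone does not rule out a flat factor.

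A conceptually cleaner alternative sidesteps the explicit computation of $\mathfrak{hol}(G,g)$ entirely. If $(G,g)$ were reducible, the de Rham theorem would split it isometrically as a product of two lower-dimensional manifolds. Since the holonomy of a Riemannian product is the direct product of the factors' holonomy groups, every parallel tensor is block-diagonal with respect to this splitting; in particular the parallel complex structure $J$ is block-diagonal, which forces both factors to be $J$-invariant and hence $2$-dimensional. Each $2$-dimensional factor is an oriented Riemannian surface and therefore Kähler, carrying its parallel rotation $J_i$, and its only parallel skew-symmetric endomorphisms are the multiples of $J_i$. Consequently every parallel skew-symmetric endomorphism of $(G,g)$ would have the form $aJ_1\oplus bJ_2$ with $a,b\in\RR$, a $2$-dimensional space. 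Taking $a\neq\pm b$ yields a parallel endomorphism that is not a multiple of a complex structure, contradicting the corollary following Theorem~\ref{dim4}, according to which $\dd_{4,\frac12},\dd_{4,2},\dd'_{4,\lambda}$ each admit only a $1$-dimensional space of parallel skew-symmetric endomorphisms, all multiples of $J$. Therefore $(G,g)$ is irreducible.
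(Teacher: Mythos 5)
Your first route is precisely the paper's actual proof (Ambrose--Singer: compute the $\nabla_{e_i}$, the curvature operators, their covariant derivatives, then check by hand that no proper subspace is invariant), but you leave every computation undone, so it is a plan rather than a proof; the paper carries it out for $(\dd_{4,2},\ip_t)$, finding $\mathfrak{hol}(G,g)=\Span\{e^{01},\,e^{23},\,e^{02}+e^{13},\,e^{12}-e^{03}\}$, a four-dimensional algebra isomorphic to $\mathfrak{u}(2)$, and verifying irreducibility directly. The weight of your proposal therefore rests on the ``cleaner alternative'', and that argument has a genuine gap: the corollary following Theorem \ref{dim4} classifies parallel skew-symmetric endomorphisms of the \emph{metric Lie algebra}, i.e.\ left-invariant parallel tensors (endomorphisms commuting with every $\nabla_x$, $x\in\ggo$), whereas the tensor $aJ_1\oplus bJ_2$ you build from a de Rham splitting is only known to be a parallel tensor \emph{field} on $(G,g)$, equivalently an endomorphism of $T_eG$ commuting with $\Hol(G,g)$. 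These two spaces are genuinely different: for $(\RR\times\mathfrak{e}(2),\ip_t)$ the left-invariant parallel skew-symmetric tensors form the $2$-dimensional family of Table \ref{tabla1}, while the manifold is flat, isometric to $\RR^4$, and carries a $6$-dimensional space of parallel skew-symmetric fields. So exhibiting a parallel field that is not a multiple of a complex structure does not contradict the corollary, and your final contradiction does not follow as written.

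The gap can be repaired, but it requires exactly the idea you omit: showing that the de Rham distributions are left-invariant. Passing to the refined de Rham decomposition $T_pG=V_0\oplus V_1\oplus\cdots$ (flat factor equal to the fixed space of the holonomy action, plus irreducible summands, which are pairwise non-isomorphic as holonomy modules), the decomposition is canonical, so every isometry permutes its summands; since $G$ is connected and $L_e=\mathrm{id}$, each left translation preserves each distribution, hence the projections, and with them $J_1$ and $J_2$, are left-invariant -- and only then does the corollary apply. Note also that your intermediate claim that every parallel tensor is block-diagonal with respect to \emph{an arbitrary} two-factor splitting is false (flat $\RR^4=\RR^2\times\RR^2$ is a counterexample); it holds only after the refinement above via a Schur-type argument, and the degenerate cases must be excluded separately: $J$ cannot preserve an odd-dimensional summand, and $R\neq 0$ (e.g.\ $R(e_0,e_1)=\nabla_{e_1}\neq 0$ for $\dd_{4,2}$) rules out the totally flat case. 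With these repairs your route becomes correct and arguably more conceptual than the paper's holonomy computation, but as submitted the key step bridging parallel fields on $(G,g)$ back to the Lie-algebra-level classification is missing.
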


\begin{proof}
We do the proof only for $(\ggo,\ip)=(\mathfrak{d}_{4,2},\ip_{t}) $, the other cases being similar 
. Since all the metrics $\ip_t$  are homothetic, we can assume $t=1$ and we simply denote $\ip=\ip_1$. We can identify $(\ggo,\ip)$ with the Euclidean space $\mathbb{R}^4$ so that $\{e_1,e_2,e_3,e_4\}$ is identified with the canonical basis.

As remarked previously, we first have to find $\mathfrak{hol}(G,g)$, which is the Lie subalgebra of $\mathfrak{so}(\ggo,\ip)$ spanned as vector space by the curvature tensors and their covariant derivatives. Then we have to show that $\ggo$ has no proper subrepresentations of $\mathfrak{hol}(G,g)$. 

First, one can check that $\nabla_{e_4}=0$ and that
\begin{scriptsize}
\begin{align*}
\nabla_{e_1}&= \left[\begin{array}{cccc} 0 &  0& 0 & -1\\0 & 0 & -\frac{1}{2} & 0\\0 & \frac{1}{2} & 0 &0 \\1 & 0 & 0 & 0 \end{array} \right], &
\nabla_{e_2}&= \left[\begin{array}{cccc} 0 & 0 & \frac{1}{2} & 0\\0 & 0 & 0 & \frac{1}{2}\\ -\frac{1}{2} & 0 & 0 & 0\\0 & -\frac{1}{2} & 0 & 0 \end{array} \right],  & \nabla_{e_3}&= \left[\begin{array}{cccc} 0 & \frac{1}{2} & 0 & 0\\
-\frac{1}{2} & 0 & 0 & 0\\ 
0 & 0 & 0 & -\frac{1}{2}\\
 0 & 0 & \frac{1}{2} & 0  \end{array} \right].
 \end{align*}
\end{scriptsize}
Recall that $R(x,y)=\nabla_x\nabla_y-\nabla_y\nabla_x
-\nabla_{[x,y]}$ is the curvature tensor for all $x,y \in \ggo$, we now compute 
\begin{align*}
R(e_1,e_2)&=-\frac{1}{2}\nabla_{e_3}, &
R(e_1,e_3)&=-\frac{1}{2}\nabla_{e_2},\\
R(e_1,e_4)&=\nabla_{e_1}, &
R(e_2,e_3)&=\left[\begin{matrix} 0&0&0&-\frac{1}{2} \\ 0&0&\frac{1}{2}&0 \\ 0&-\frac{1}{2}&0&0\\ \frac{1}{2}&0&0&0 \end{matrix} \right] , \\
R(e_2,e_4)&=-\frac{1}{2} \nabla_{e_2}, & R(e_3,e_4)&=\frac{1}{2}\nabla_{e_3}.
\end{align*}

In order to facilitate the computation of the covariant derivatives of these tensors,
we will make use of the natural identification $\mathfrak{so}(\mathfrak{g},\ip)\cong \ggo^*\wedge\ggo^*$. The tensor $R(e_i,e_j)$ is identified with the 2-form $R^{ij}$ defined by, 
\[R^{ij}(x,y)=\langle R(e_i,e_j)x,y\rangle,\quad x,y\in\ggo.\]
It follows that
\begin{align*}
R^{12}&=\frac{1}{4}\left(e^{12}-e^{34}\right), & R^{13}&=\frac{1}{4}(e^{13}+e^{24}), & R^{14}&=e^{14}+\frac{1}{2}e^{23},\\
R^{23}&=\frac{1}{2}\left(e^{14}-e^{23}\right), &
R^{24}&=\frac{1}{4}\left(e^{13}+e^{24}\right), & 
R^{34}&=-\frac{1}{4}(e^{12}-e^{34}),
\end{align*}
where $\{e^1,e^2,e^3,e^4\}$ is the dual basis of $\{e_1,e_2,e_3,e_4\}$ and $e^{ij}$ denotes  $e^i\wedge e^j$. 

Using that $\nabla_x \left(\theta_1 \wedge \theta_2\right)=\nabla_x \theta_1 \wedge \theta_2 +\theta_1 \wedge \nabla_x \theta_2$ for $\theta_1,\theta_2$ 1-forms, and that $\nabla_x y^*(z)= \langle \nabla_x y,z \rangle$ where $y^*$ is the linear functional such that $y^*(x)=\langle x,y\rangle$,  we get the following relations
\begin{align*}
\nabla_{e_1} R^{12}&=-\frac{1}{8}(e^{13}+e^{24}), &\nabla_{e_2} R^{12}&=\frac{1}{4}(e^{23}-e^{14}), \\ \nabla_{e_3} R^{12}&=0,&
\nabla_{e_1} R^{13}&=-\frac{1}{8}(e^{12}-e^{34}), \\ \nabla_{e_2} R^{13}&=0, & \nabla_{e_2} R^{13}&=\frac{1}{4}(e^{14}-e^{23}),\\
\nabla_{e_1} R^{14}&=0, & \nabla_{e_2} R^{14}&=\frac{1}{4}(e^{12}-e^{34}), \\ \nabla_{e_3} R^{14}&=-\frac{1}{4}(e^{24}+e^{13}),&
\nabla_{e_1} R^{23}&=0, \\ \nabla_{e_2} R^{23}&=\frac{1}{2}(e^{12}-e^{34}), & \nabla_{e_3} R^{23}&=-\frac{1}{2}(e^{13}+e^{24}). 
\end{align*}
We see that the linear space spanned by  $R^{ij}$'s and  $\nabla_{e_k} R^{ij}$'s for $i,j,k=1,2,3,4$  is $\Span \{ e^{14}+\frac{1}{2}e^{23},e^{13}+e^{24},e^{12}-e^{34}, e^{14}-e^{23} \}=\Span\{e^{14},e^{23}, e^{13}+e^{24},e^{12}-e^{34}\}$. We conclude that $\mathfrak{hol}(G,g)$ is
\begin{small}
$$
\Span\left\{ \underbrace{\begin{bmatrix}
0&0&0&-1\\ 0&0&0&0\\0&0&0&0\\1&0&0&0
\end{bmatrix}}_{=:A},
\underbrace{\begin{bmatrix}
0&0&0&0\\ 0&0&-1&0\\0&1&0&0\\0&0&0&0
\end{bmatrix}}_{=:B},
\underbrace{\begin{bmatrix}
0&0&-1&0\\ 0&0&0&-1\\1&0&0&0\\0&1&0&0
\end{bmatrix}}_{=:C},
\underbrace{\begin{bmatrix}
0&-1&0&0\\ 1&0&0&0\\0&0&0&1\\0&0&-1&0
\end{bmatrix}}_{=:D}
\right\}
.$$	
\end{small}

We now have to show there are no proper subspaces of $\ggo=\mathbb{R}^{4\times 1}$ invariant by $A$, $B$, $C$ and $D$.
Suppose $W$ is a non-zero invariant subspace of $\ggo$, and let $x=(a_1,a_2,a_3,a_4)^T$ be a non-zero element of $W$. After multiplying by $C$, if necessary, we can assume that $(a_1,a_4)\neq (0,0)$. Then $Ax=(-a_4,0,0,a_1)^T\in W$, $CAx=(0,-a_1,-a_4,0)^T\in W$ and $DCAx=(a_1,0,0,a_4)\in W$. Note that $Ax$ and $DCAx$ are linearly independent, so we obtain that $e_1,e_4\in W$. Similarly, we get that $e_2,e_3\in W$ and therefore $W=\ggo$. The proof is now complete. 
\end{proof}
\begin{rem}\label{clase de iso de las delta} 
The Proposition \ref{irreducible} can also be shown identifying $(G,g)$ with known Riemannian manifolds.
One can easily see that $(\mathfrak{d}_{4,\frac{1}{2}},\la \cdot, \cdot \ra_{t})$ is equivalent 
up to scaling to the metric Lie algebra with parameter zero in the second family listed in the main theorem of \cite{Jensen}. 
 Analogously, $(\mathfrak{d}'_{4,\frac{\delta}{2}},\la \cdot, \cdot \ra_{t})$ is equivalent up to scaling to the metric Lie algebra with parameter $\frac{2}{\delta}$ of the same family. In both cases $(G,g)$ is isometric up to scaling to the Hermitian hyperbolic space $H^2(\mathbb{C})=SU(2,1)/S(U(2)\times U(1))$, according to \cite[Proposition 2]{Jensen}. 
On the other hand, it is well-known that $H^2(\mathbb{C})$ is an irreducible symmetric space (see \cite[pag. 315]{Besse}).

In the case of $(\mathfrak{d}_{4,2},\la \cdot,\cdot \ra_{t})$, 
it is not hard to see that $(\dd_{4,2},\langle \cdot,\cdot \rangle_{t})$ is isometric up to scaling to the metric Lie algebra described in $\ggo_{4,9}(\frac{1}{2})$, see \cite[Theorem 2.2]{Fino}.  It can be shown that  $(G,g)$ is isometric up to scaling to the unique irreducible proper 3-symmetric space of dimension four, according to \cite[Corollary 3.1]{Fino}. This space is also viewed as the irreducible K\"ahler surface corresponding to $\mathrm{F}_4$-geometry, see 
 \cite{Wall1} and \cite{Wall2}.  

\end{rem}

On the other hand we will analyze the case when  $(\ggo, \ip)$ belongs to Table \ref{tabla1}. 
First we recall the following theorem. 



\begin{thm}\cite[Theorem 10.3.2]{Petersen}
Let $(M, g)$  be an irreducible Riemannian manifold, and let $H$ be a parallel skew-symmetric $(1,1)$-tensor which is nowhere zero. Then $H=\lambda J$ for a complex parallel structure $J$ and $\lambda\in\mathbb{R}$.   
\end{thm}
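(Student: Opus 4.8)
The plan is to exploit the interaction between the parallel tensor $H$ and the restricted holonomy group, reducing the whole statement to a remark about an irreducible orthogonal representation. The starting observation is that a parallel tensor is invariant under parallel transport: since $\nabla H=0$, for every loop $\gamma$ based at $p$ the parallel transport $P_\gamma$ satisfies $P_\gamma H_p P_\gamma^{-1}=H_p$. Hence $H_p$ commutes with every element of $\Hol_p(M,g)$, and in particular with the restricted holonomy group $\Hol_p^0(M,g)$ acting on $T_pM$.

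Next I would pass to the operator $S:=H^2=H\circ H$. Being a composition of parallel tensors, $S$ is itself parallel, and since $H$ is skew-symmetric, $S$ is symmetric and negative semidefinite: indeed $\langle S_p v,v\rangle=-\langle H_p v,H_p v\rangle\le 0$ for all $v\in T_pM$. Because $S$ is parallel it too commutes with $\Hol_p^0(M,g)$, so each eigenspace of the symmetric operator $S_p$ is an invariant subspace of the holonomy representation on $T_pM$. This is the crux of the argument: by the irreducibility hypothesis the only holonomy-invariant subspaces are $\{0\}$ and $T_pM$, which forces $S_p$ to have a single eigenvalue, i.e. $S_p=c(p)\,\id$ for some scalar $c(p)\le 0$.

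It then remains to check that $c$ is a negative constant and to extract $J$. Since $S=c\,\id$ is parallel and $M$ is connected, differentiating yields $(\nabla_X c)\,\id=\nabla_X S=0$, so $c$ is constant on $M$. If $c=0$ then $S=H^2=0$, whence $\langle Hv,Hv\rangle=-\langle Sv,v\rangle=0$ for every $v$ and $H\equiv 0$, contradicting that $H$ is nowhere zero; therefore $c=-\lambda^2$ for some $\lambda>0$. Setting $J:=\lambda^{-1}H$ then gives a parallel, skew-symmetric endomorphism with $J^2=\lambda^{-2}H^2=-\id$, that is, a parallel orthogonal complex structure, and $H=\lambda J$ as claimed.

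The hard part will be the step that converts ``$H$ parallel'' into ``$H^2$ has holonomy-invariant eigenspaces''; it rests on the holonomy principle that parallel transport around loops generates $\Hol^0(M,g)$ and that such transport preserves any parallel tensor, so that the eigenspace decomposition of the parallel symmetric operator $H^2$ is a decomposition into holonomy-invariant subspaces. Once irreducibility collapses that decomposition to a single summand, everything else is elementary linear algebra together with the use of the hypothesis that $H$ is nowhere vanishing to exclude the degenerate value $c=0$.
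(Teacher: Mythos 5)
Your proof is correct and follows essentially the same route as the source: the paper itself gives no proof, citing \cite[Theorem 10.3.2]{Petersen}, and your argument --- $H^2$ is a parallel symmetric negative semidefinite operator commuting with the restricted holonomy, so its eigenspaces are holonomy-invariant, irreducibility forces $H^2=c\,\id$ with $c$ constant by parallelism, and the nowhere-vanishing hypothesis excludes $c=0$, yielding $J=\lambda^{-1}H$ with $J^2=-\id$ --- is precisely the standard holonomy-principle proof found there. No gaps: the key steps (invariance of parallel tensors under parallel transport, diagonalizability of the symmetric operator, constancy of $c$ on the connected $M$) are all justified.
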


\begin{prop}
Let $(\ggo,\la \cdot , \cdot \ra)$ be one of the metric Lie algebras of Table \ref{tabla1} and let $(G,g)$ be the associated simply connected  Lie group with the corresponding left invariant metric. Then $(G,g)$ is reducible as a Riemannian manifold, and after a suitable scaling $(G,g)$ is isometric to one of the following list: 
 $\mathbb{R}^4$, $\mathbb{R}^2\times\mathbb{H}^2$,
  or $\mathbb{H}^2\times\mathbb{H}^2$ (here the scaling is in each factor).

\end{prop}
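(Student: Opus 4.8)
The plan is to compute the holonomy algebra of each $(G,g)$ via the Ambrose--Singer theorem and read off the de Rham decomposition. First I would write down the Levi--Civita operators $\nabla_x$ for $x$ in the distinguished basis $\{e_1,f_1,e_2,f_2\}$ using the Koszul formula \eqref{koszul}. The crucial structural observation is that for every metric Lie algebra in Table \ref{tabla1} the operators $\nabla_x$ are block-diagonal with respect to the orthogonal splitting $\ggo=V_1\oplus V_2$, where $V_1=\Span\{e_1,f_1\}$ and $V_2=\Span\{e_2,f_2\}$: this is exactly the form \eqref{nabla in term of alpha and beta} found in the proof of Theorem \ref{dim4}, which applies here because each of these algebras carries a parallel $H$ with $|a_1|\neq|a_2|$. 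Since every $\nabla_x$ preserves $V_1$ and $V_2$, so do all curvature endomorphisms $R(x,y)=[\nabla_x,\nabla_y]-\nabla_{[x,y]}$ and all their iterated covariant derivatives; hence, by Ambrose--Singer, the holonomy algebra $\mathfrak{hol}:=\operatorname{Lie}(\Hol(G,g))$ satisfies $\mathfrak{hol}\subseteq\so(V_1)\oplus\so(V_2)$. In particular $V_1$ and $V_2$ are $\Hol(G,g)$-invariant, and since $(G,g)$ is complete and simply connected the de Rham theorem yields an isometric splitting $(G,g)\cong N_1\times N_2$ with $T_eN_i=V_i$.

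Next I would decide block by block whether the holonomy acts trivially, which reduces to evaluating a single curvature operator on each block. I expect the following outcomes. For $\RR\times\mathfrak{e}(2)$ every curvature operator vanishes, so $(G,g)$ is flat. For $\RR^2\times\aff(\RR)$ the block $V_1$ is flat (it is central, so $\nabla|_{V_1}=0$) while $R(e_2,f_2)=-\nabla_{f_2}\neq0$ on $V_2$. For $\mathfrak{r}'_{4,\lambda,0}$ the roles are reversed: $\nabla_{e_2}=\nabla_{f_2}=0$ gives a flat $V_2$, while $R(e_1,f_1)=-\lambda\nabla_{f_1}\neq0$ curves $V_1$. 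For $\aff(\RR)\times\aff(\RR)$ both blocks are curved. In each non-flat block the sectional curvature of the plane $V_i$ comes out strictly negative.

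Finally I would identify the factors. A block on which $\mathfrak{hol}$ acts trivially produces a flat, complete, simply connected factor, hence a Euclidean space; a two-dimensional block on which the holonomy is all of $\so(2)$ produces an irreducible surface factor $N_i$ which, being a de Rham factor of the homogeneous space $(G,g)$, is itself homogeneous and therefore of constant curvature, so $N_i\cong\HH^2$ (up to scaling) because that curvature is negative. Assembling the four cases gives $\RR^4$, then $\RR^2\times\HH^2$ for \emph{both} $\RR^2\times\aff(\RR)$ and $\mathfrak{r}'_{4,\lambda,0}$, and $\HH^2\times\HH^2$ for $\aff(\RR)\times\aff(\RR)$, as claimed.

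The main obstacle is the identification step precisely for $\mathfrak{r}'_{4,\lambda,0}$. The other three algebras are genuine direct products with block-diagonal (hence product) metric, so $(G,g)$ is literally a Riemannian product of the subgroups and one may invoke the classical identifications that $\mathrm{Aff}(\RR)$ with a left invariant metric is $\HH^2$, that $\RR^k$ is flat, and that the motion group of the Euclidean plane admits a flat left invariant metric. By contrast $\mathfrak{r}'_{4,\lambda,0}$ is only a semidirect product, so the hyperbolic factor $N_1$ cannot be read off as a product of subgroups; it must be extracted from the de Rham splitting $(G,g)\cong N_1\times N_2$ together with the homogeneity-plus-constant-curvature argument (equivalently, one checks that the totally geodesic leaf tangent to $V_1$ is the $\aff(\RR)$-subgroup $\exp(\Span\{e_1,f_1\})$ with its induced metric, again $\HH^2$). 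Verifying that this leaf really is that subgroup, and that its intrinsic curvature is the constant $-\lambda^2$ up to scale, is the only delicate point.
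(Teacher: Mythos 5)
Your proof is correct, but it follows a genuinely different route from the paper's. The paper argues case by case through the group structure: for $\RR\times\mathfrak{e}(2)$ it computes $R=0$ directly and concludes $(G,g)\cong\RR^4$; for $\RR^2\times\aff(\RR)$ and $\aff(\RR)\times\aff(\RR)$ it observes that these metric Lie algebras are orthogonal direct products, so $(G,g)$ is literally a Riemannian product of copies of $\RR^2$ and of $\HH^2$ (using $\aff(\RR)$ with orthonormal basis $\{e,f\}$, $[e,f]=f$, as the Lie-theoretic model of $\HH^2$); and for $\mathfrak{r}'_{4,\lambda,0}$ it writes $\ggo=\aff(\RR)\ltimes\RR^2$ with orthogonal summands and asserts that the de Rham decomposition is, up to scaling, $\HH^2\times\RR^2$. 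You instead run a uniform holonomy argument: each algebra of Table \ref{tabla1} carries a parallel $H$ with $|a_1|\neq|a_2|$, so Lemma \ref{lemautil} forces every $\nabla_x$ into the block form \eqref{nabla in term of alpha and beta}; hence $V_1=\Span\{e_1,f_1\}$ and $V_2=\Span\{e_2,f_2\}$ are preserved by all curvature operators and their covariant derivatives, thus by $\Hol(G,g)$ (Ambrose--Singer plus simple connectedness), and de Rham splits $(G,g)\cong N_1\times N_2$; one curvature evaluation per block then decides flat versus negatively curved, and your block computations ($R=0$ for $\RR\times\mathfrak{e}(2)$; $R(e_2,f_2)=-\nabla_{f_2}$ up to normalization; $\nabla_{e_2}=\nabla_{f_2}=0$ and $R(e_1,f_1)=-\lambda\nabla_{f_1}$ for $\mathfrak{r}'_{4,\lambda,0}$) agree with the paper's. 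What your approach buys: it treats the one delicate case, $\mathfrak{r}'_{4,\lambda,0}$, without the identification the paper is terse about --- an orthogonal semidirect sum is not automatically a Riemannian product, and here it works precisely because $\ad_{e_1}$ acts on $V_2$ by rotations, equivalently because $V_2$ is flat and $\nabla$-invariant, which your argument makes transparent --- and it reuses exactly the Ambrose--Singer machinery the paper itself deploys in Proposition \ref{irreducible} for the irreducible case, making Section \ref{The De Rham decomposition} methodologically uniform. What the paper's route buys: explicit isometric identifications, and no appeal to the (standard but nontrivial) fact that de Rham factors of a homogeneous space are homogeneous; you correctly flag this as the delicate point, and your fallback --- the leaf tangent to $V_1$ is the subgroup with Lie algebra $\Span\{e_1,f_1\}\cong\aff(\RR)$, totally geodesic since every $\nabla_x$ preserves $V_1$, with intrinsic constant curvature $-\lambda^2/t$ --- closes that gap cleanly, so no genuine gap remains.
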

\begin{proof} 
Clearly if $(\ggo, \ip)$ belongs to Table \ref{tabla1}, then $(G,g)$ is reducible. 
In the case $( \mathbb{R}\times \mathfrak{e}(2),\ip_t)$, one can easily check that the curvature tensor $R$ is zero. This implies that $\mathfrak{hol}(G,g)=0$ and hence any 1-dimensional subspace  of $\ggo=T_eG$ is invariant under $\hol(G,g)=\{e\}$, where $\{e\}$ is the trivial group.
Consequently, any decomposition of $T_eG$ as a direct sum of 1-dimensional subspaces leads to an identification of $G$ with $\mathbb{R}^4$ as Riemannian manifolds.

Before proceeding with the other cases, recall that the hyperbolic plane $\mathbb{H}^2=\{(x,y):y>0\}$ is a simply connected Lie group with multiplication $(x,y)\cdot (x',y')=(x+yx',yy')$, and that the usual metric $(ds)^2=\dfrac{(dx)^2+(dy)^2}{y^2}$ is left invariant. As a Riemannian manifold, it is irreducible with negative curvature.
 The associated metric Lie algebra is $\aff(\mathbb{R})=\Span\{e,f\}$, where $\{e,f\}$ is an orthonormal basis on which the Lie brackets are given by $[e,f]=f$.

With the previous observation, it is straightforward to prove that the cases $( \mathfrak{aff}(\mathbb{R})\times \mathbb{R}^2,\ip_t)$ and $( \mathfrak{aff}(\RR)\times \mathfrak{aff}(\RR),\ip_{t,s})$ corresponds to Riemannian manifolds $\mathbb{H}\times \mathbb{R}^2$ and $\mathbb{H}\times \mathbb{H}$ respectively.

Finally we consider the case $( \mathfrak{r}'_{4,\lambda,0},\ip_t)$. We can take the parameter $t=1$ in Table \ref{tabla1} since all the metrics are homothetic.
The commutator $[\ggo,\ggo]=\Span \{ e_2,f_2\}$ is abelian, the subspace $\Span \{ e_1,f_1\}$ with the induced metric is a metric Lie algebra isomorphic to $\aff(\mathbb{R})$ with the usual metric, and $\mathfrak{r}'_{4,\lambda,0}$ has the description $\aff(\mathbb{R})\ltimes \mathbb{R}^2$. Since the factors are orthogonal, we obtain a decomposition $G=\mathbb{H}^2\ltimes\mathbb{R}^2$, where $\mathbb{H}^2$ has the hyperbolic metric, $\mathbb{R}^2$ has the Euclidean metric and the factors are orthogonal. In other words, the de Rham decomposition of the pair $(G,g)$ associated with $(\ggo,\ip)$ is, up to scaling, $\mathbb{H}^2\times\mathbb{R}^2$.
\end{proof}
\

\end{document}